\pgfplotsset{compat=1.11}
\tikzset{every picture/.style={line width=0.75pt}} 
\newcommand{\I}{\operatorname{\mathscr{I}}}
\newcommand{\Burch}{\operatorname{Burch}}
\newcommand{\BI}{\operatorname{BI}}
\newcommand{\RS}{\mathfrak{R}}
\newcommand{\rs}{\mathfrak{r}}
\def\legendre@dash#1#2{\hb@xt@#1{%
  \kern-#2\p@
  \cleaders\hbox{\kern.5\p@
    \vrule\@height.2\p@\@depth.2\p@\@width\p@
    \kern.5\p@}\hfil
  \kern-#2\p@
  }}
\def\@legendre#1#2#3#4#5{\mathopen{}\left(
  \sbox\z@{$\genfrac{}{}{0pt}{#1}{#3#4}{#3#5}$}%
  \dimen@=\wd\z@
  \kern-\p@\vcenter{\box0}\kern-\dimen@\vcenter{\legendre@dash\dimen@{#2}}\kern-\p@
  \right)\mathclose{}}
\newcommand\legendre[2]{\mathchoice
  {\@legendre{0}{1}{}{#1}{#2}}
  {\@legendre{1}{.5}{\vphantom{1}}{#1}{#2}}
  {\@legendre{2}{0}{\vphantom{1}}{#1}{#2}}
  {\@legendre{3}{0}{\vphantom{1}}{#1}{#2}}
}
\def\dlegendre{\@legendre{0}{1}{}}
\def\tlegendre{\@legendre{1}{0.5}{\vphantom{1}}}
\newtheorem{theorem}{Theorem}
\newtheorem*{theorem*}{Theorem}
\theoremstyle{definition}
\newtheorem{defn}{Definition}
\newtheorem{lemma}{Lemma}
\newtheorem{remark}{Remark}
\newtheorem{example}{Example}
\newtheorem{prop}{Proposition}
\newtheorem{corollary}{Corollary}
\title{Generalizations of Burch Ideals and Ideal-Periodicity}
\author{Tejas Rao}
\begin{document}

\maketitle 
Consider a minimal free resolution of a module $M$ over a local Noetherian ring $R$. Over such rings, resolutions are often infinite, for example by the The Auslander-Buchsbaum formula when $\operatorname{depth}(R)=0$ \cite{aus}. The question of periodicity in infinite resolutions is the subject of intensive research for example in the works of Eisenbud, Peeva, and Gasharov, and the central survey of Avramov \cite{avr,eishom,pee}. 

The weaker question of whether the ideals of minors of maps in these resolutions are periodic is more recent. Dao, Kobayashi, and Takahashi, introduced an invariant of depth $0$ rings called the Burch Index, among other things proving that certain conditions allowed for direct summands to be present in a step in a resolution \cite{dao2020burch}. Applying and these techniques, Eisenbud and Dao showed that the $1\times 1$ minors of modules over a depth $0$ local ring $R$ of embedding dimension $\geq 2$ are periodic provided that the Burch index is at least $2$ \cite{eisbur}. More specifically, they showed that in this case the syzygies $\operatorname{syz}_n^R(M)$ in the resolution have $k$ as a direct summand for all sufficiently large $n$, and simultaneously that the ideals of $1\times 1$ minors are asymptotically all $\mathfrak{m}$ \cite[Theorem 4.1]{eisbur}. Brown, Dao, and Sridhar further researched this \emph{ideal-periodicity}, proving $2$-periodicity over complete intersections and Golod rings \cite{dao2}. 

The case of periodicity for Burch index $0$ and $1$ local depth $0$ rings is still open, and will be a major part of this paper. We will introduce certain generalizations of Burch Indices, which allow one to prove periodicity in classes of Burch Index $1$ and $0$ rings. 
In addition, these generalizations often make sense in positive depth Noetherian local rings, and periodicity is proven in some such rings as well. Extensive calculations are utilized, entirely in Macaulay2 \cite{M2}. 

Throughout this paper we consider regular local rings $(S,\mathfrak{n},k)$ with $S=k[[x_1,...,x_n]]$, and corresponding reductions $(R,\mathfrak{m},k)$ with $R=S/I$. We will write minimal $R$-resolutions $\epsilon: (F_\bullet,A_\bullet)\rightarrow M$ and consider the $R$-ideals generated by $1\times 1$ minors of the matrices: $\I_1(A_1)$. The primary object of study is the $N$-Burch Ideal
$$\BI_N(I):=\mathfrak{n}I:(I:N)$$
and corresponding $N$-Burch Index
$$\Burch_N(I):=\operatorname{length}_S(N/(\BI_N(I)\cap N))$$
We will consider iterated Burch Indices
$$\Burch^j(I):=\Burch_{\BI^{j-1}(I)}(I)$$
where $\BI^{0}(I):=\mathfrak{m}$. Let $n$ be the first index such that $\Burch^n(I)=0$ and consider the generalized Burch Index, given as
$$\operatorname{gb}(I)=\begin{cases}
    \max \{\Burch^j(I)\}_{j<n} & \text{if $n\neq 1$,}\\
    0 & \text{otherwise.}
\end{cases}$$
If there is no such $n$, we take the supremum of all $\Burch^j(I)$. Also let Burch depth be $$\operatorname{bd}(I)=\sup \{j | \Burch^i(I)=1 \text{ for } i\leq j\}$$ With this terminology, the primary theorem of Eisenbud and Dao's paper on Burch Rings, Theorem $4.1$, states that any resolution $\epsilon: (F_\bullet,A_\bullet)\rightarrow M$ over a ring $R=S/I$ for which $\operatorname{gb}(I)\geq 2$ and $\operatorname{bd}(I)=0$, satisfies $\I_1(A_m)=\mathfrak{m}$ for $m>>0$ \cite{eisbur}. The first main result of this paper extends this to arbitrary Burch Depth: 
\begin{theorem}\label{Big1}
    If $\operatorname{gb}(I)\geq 2$, then all minimal resolutions $\epsilon: (F_\bullet, A_\bullet)\rightarrow M$ of modules over $R$ satisfy $\I_1(A_m)+\I_1(A_{m+1})=N$ for some fixed ideal $N$ and $m>>0$. In particular, $N=\BI^j(I)$ for some $0\leq j\leq \operatorname{bd}(I)$. 
\end{theorem}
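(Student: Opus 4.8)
\noindent I would argue by induction on $\operatorname{bd}(I)$. The base case $\operatorname{bd}(I)=0$ is essentially \cite[Theorem 4.1]{eisbur}: then $\Burch^1(I)\ge 2$ (it cannot be $0$, or else $n=1$ and $\operatorname{gb}(I)=0$), so that theorem gives $\I_1(A_m)=\mathfrak m=\BI^0(I)$ for $m\gg 0$, hence $\I_1(A_m)+\I_1(A_{m+1})=\mathfrak m$. For the inductive step, first make the usual reductions: free summands of $M$ vanish from all sufficiently high syzygies, so they do not affect $\I_1(A_m)$ for $m\gg 0$; the conclusion is trivial if $\operatorname{pd}_R M<\infty$; and since only the tail of the resolution matters, $M$ may be replaced by an arbitrarily deep syzygy $\operatorname{syz}_t^R(M)$, assumed indecomposable, nonfree, of infinite projective dimension. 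It then suffices to control the minimal resolution of a single deep syzygy, where the Burch-ideal structure takes over.

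\smallskip
\noindent The main lemma I would need is a direct-summand mechanism for deep syzygies, generalizing the ones used by Dao--Kobayashi--Takahashi and by Eisenbud--Dao, now keyed to the iterated Burch ideals: for $m\gg 0$, $\operatorname{syz}_m^R(M)$ carries, alternately with its successor, a direct summand from a fixed pair $(L,L')$ depending only on $R$ and not on $M$, with $L'$ a direct summand of $\operatorname{syz}_1^R(L)$ and $L$ a direct summand of $\operatorname{syz}_1^R(L')$; the situation of \cite{eisbur} is the special case in which the two minor ideals below coincide. The hypothesis $\operatorname{gb}(I)\ge 2$ is exactly what forces this pair to persist under taking syzygies rather than be eroded away. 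Since $\I_1$ is additive over direct sums, one gets $\I_1(A_m)\supseteq N_1$ and $\I_1(A_{m+1})\supseteq N_2$, where $N_1,N_2\subseteq\mathfrak m$ are the $1\times1$ minor ideals of minimal presentations of $L$ and of $L'$; a computation with generators should then identify $N_1+N_2=\BI^j(I)$ for a suitable $j$. The two-term sum is needed because the colength-one behaviour occurring for $1\le i\le\operatorname{bd}(I)$ can split $N=N_1+N_2$ between $L$ and $L'$, each $N_i$ being a proper subideal whose complementary part surfaces only one homological step later.

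\smallskip
\noindent To pin down $j$ and close the induction, I would track the descent $\mathfrak m=\BI^0(I)\supseteq\BI^1(I)\supseteq\cdots$, which drops by colength exactly $1$ precisely while $\Burch^i(I)=1$, i.e.\ for $1\le i\le\operatorname{bd}(I)$; at index $\operatorname{bd}(I)+1$ the Burch index is $\ge 2$, since it cannot be $0$ (else $n=\operatorname{bd}(I)+1$ and $\operatorname{gb}(I)=1$). This jump is what pins the universal pair down and keeps the relevant Burch ideal from dropping below $\BI^{\operatorname{bd}(I)}(I)$. Passing from $R$ to the ``$\BI^1(I)$-shifted'' picture lowers the Burch depth by one, so the inductive hypothesis applied to $\operatorname{syz}_1^R(L)$ returns $N=\BI^j(I)$ with $1\le j\le\operatorname{bd}(I)$, while an orbit that never leaves $\BI^0(I)$ gives $j=0$. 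In all cases $0\le j\le\operatorname{bd}(I)$, and the sequence $\{\I_1(A_m)\}_{m\gg 0}$ is eventually periodic of period dividing $2$.

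\smallskip
\noindent\textbf{Main obstacle.} The hardest step is the reverse inclusion $\I_1(A_m)+\I_1(A_{m+1})\subseteq N$ --- that deep syzygies carry no $1\times1$ minor outside $N$. In \cite{eisbur} the target is $\mathfrak m$ and this is automatic; here, when $\operatorname{bd}(I)>0$, the ideal $N=\BI^j(I)$ is strictly smaller than $\mathfrak m$, so the bound is substantive and must use both $\operatorname{gb}(I)\ge 2$ and the colength-one data recorded by $\operatorname{bd}(I)$, presumably via a rigidity statement for deep syzygies. A secondary, bookkeeping-type difficulty is the dynamical claim above --- eventual $2$-periodicity with no overshoot past $\BI^{\operatorname{bd}(I)}(I)$ --- which again reduces to $\Burch^{\operatorname{bd}(I)+1}(I)\ge 2$.
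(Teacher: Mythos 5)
Your outline diverges from the paper's argument at its core, and the two places where it diverges are exactly the places you leave unproven, so as it stands there is a genuine gap rather than an alternative proof. The "main lemma" you posit --- a fixed pair $(L,L')$ of modules splitting off alternately from deep syzygies, with $\operatorname{gb}(I)\ge 2$ forcing the pair to persist --- is asserted, not proved, and it is not the mechanism available here. When $\operatorname{bd}(I)>0$ one cannot expect any fixed module (in particular $k$) to split off deep syzygies; this is precisely why the Eisenbud--Dao direct-summand technique stops at Burch index computed against $\mathfrak m$. The paper instead works at the level of ideals of minors: Lemma \ref{dual} uses realization sets to produce columns $[x]_i$ and $[x^*]_j$ that recur with period $2$ (giving $N\subset\I_1(A_{m+2a})$ without any splitting), and Lemma \ref{Up} is a Tor computation against colength-one ideals $Q\subsetneq N$, using $\Burch_N(I)\ge 2$ to exhibit a minimal generator of $\operatorname{Tor}_{m-2}(\operatorname{im}(A_1),Q)$ killed by some $n\in N\setminus Q$, contradicting $\operatorname{Tor}_m(M,R/Q)\simeq\oplus R/Q$. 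Without a proof of your summand lemma (or a substitute like Lemma \ref{Up}), the argument does not get off the ground for $\operatorname{bd}(I)>0$.

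Two further points. First, the obstacle you flag --- the inclusion $\I_1(A_m)+\I_1(A_{m+1})\subseteq N$ --- is not resolved in the paper by a rigidity statement; it dissolves because $N$ is identified a posteriori. One first shows the sum contains $\BI^{\operatorname{bd}(I)}(I)$ (via Lemma \ref{Up} or Corollary \ref{dual1} according to whether $\I_1(A_1)$ lies in $\BI^{\operatorname{bd}(I)}(I)$), and then, whenever the containment is strict, some $\I_1(A_v)\not\subset\BI^{j}(I)$ and Corollary \ref{dual1} forces containment of the next larger ideal $\BI^{j-1}(I)$; the chain $\BI^{\operatorname{bd}(I)}(I)\subset\cdots\subset\BI^0(I)=\mathfrak m$ terminates at $\mathfrak m$, where the trivial upper bound yields equality. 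Second, your induction on $\operatorname{bd}(I)$ via a ``$\BI^1(I)$-shifted picture'' is not a defined operation: the iterated Burch ideals are all computed from the same $S$ and $I$, and no change of ring or of module is exhibited that lowers Burch depth by one. The paper needs no such induction --- it applies Lemma \ref{Up} once with $N=\BI^{\operatorname{bd}(I)}(I)$, where $\Burch_N(I)=\Burch^{\operatorname{bd}(I)+1}(I)\ge 2$ (your observation that this index cannot be $0$ or $1$ is correct and is the one step shared with the paper), and then climbs the chain as above.
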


To prove this we will utilize two key propositions, Lemma \ref{dual} and Lemma \ref{Up}. This theorem will be proven in Section \ref{kemma}. Parsing what $\operatorname{gb}(I)\geq 2$ implies, we must find ideals $I\subset S$ such that $\Burch^j(I)=1$ for $1\leq j<n$ and $\Burch^n(I)\geq 2$. Here is such an example: 

\begin{example}\label{one}
    Let $S=k[[x_1,...,x_{m-1},y]]$ and $I=(x_1y,x_2y,...,x_{m-1}y,y^{n+1})$. Notice that the rings $S/I$ are not Cohen-Macaulay. Then $\Burch^j(I)=1$ for $1\leq j<n$, and $\Burch^n(I)=m$ (one can in fact take any $m\geq 2$, and when $m=1$ the construction works as well but of course with $\Burch^n(I)=1$). In particular, one can compute 
\begin{align*}
        \BI^1(I)&=(x_1,x_2,...,x_{m-1},y^2)\\
        \BI^2(I)&=(x_1,x_2,...,x_{m-1},y^3)\\
        &...\\
        \BI^{n-1}(I)&=(x_1,x_2,...,x_{m-1},y^{n})\\
        \BI^{n}(I)&=I+(x_1,x_2,...,x_{m-1})^2
    \end{align*}
    because
\begin{align*}
    (I:n)&=(x_1y,x_2y,...,x_{m-1}y,y^{n})\\
    (I:\BI^1(I))&=(x_1y,x_2y,...,x_{m-1}y,y^{n-1})\\
    &...\\
    (I:\BI^{n-2}(I))&=(x_1y,x_2y,...,x_{m-1}y,y^{2})\\
    (I:\BI^{n-1}(I))&=(y)
\end{align*}
   

One may also note that $\BI^j(I)=\BI^n(I)$ for $j\geq n$ since in this range $(I:\BI^j(I))=(y)$. Thus, $\Burch^j(I)=0$ for $j>n$. In particular, $\operatorname{gb}(I)=m$ and $\operatorname{bd}(I)=n$.

\end{example}

Our second main result is related to a notion of \textit{untwisting}. In particular, we develop under certain conditions a column-wise Burch approach, where we need only positive of $\Burch_{\I_1(c)}(I)$ for $\I_1(c)$ the ideal generated by entries of some column in a matrix in a resolution, in Lemma \ref{dual}. Intuitively, the ideals $\I_1(c)$ must be 'small' for this approach to be powerful. Thus when $\I_1(c)$ is large for each column in a matrix, we develop Lemma \ref{twist1} to, under certain conditions, break apart these columns into smaller ideals $N$, and determine periodicity by considering $\BI_{N}(I)$. This culminates in the second main result of this paper:

\begin{theorem}\label{Big2}
    Fix $S=k[[x_1,...,x_n]]$ and an ideal $I$. Assume for each $i$, there exists some $j\neq i$ and there exists some $\alpha$ such that $\alpha x_j$ and $\alpha x_i$ are minimal generators of $I$. Then for any minimal $R$-resolution $\epsilon: (F_\bullet,A_\bullet)\rightarrow M$, if for each $x_j\in \{x_1,...,x_n\}$ there exists an index $m$ such that $(0)\subsetneq \I_1(c_m)\subset (x_j)$, $$\I_1(A_a)=\mathfrak{m}$$for $a>>0$.  
\end{theorem}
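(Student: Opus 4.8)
The plan is to show $\mathfrak m\subseteq\I_1(A_a)$ for $a\gg 0$ one coordinate direction at a time: for each variable $x_j$ I would run the column-wise Burch argument of Lemma \ref{dual} in the direction $(x_j)$, first using the untwisting of Lemma \ref{twist1} to replace the hypothesized column $c_m$ — which need only satisfy $\I_1(c_m)\subseteq(x_j)$, not equality — by an honest column whose entry ideal is exactly $(x_j)$. The reverse inclusion $\I_1(A_a)\subseteq\mathfrak m$ is automatic since the resolution is minimal. One may assume $\operatorname{pd}_R M=\infty$, as otherwise the column hypothesis cannot be met unless $R$ is a field, where the statement is trivial.

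First I would record what the generator-sharing hypothesis buys. Fix $j$ and, applying the hypothesis to the variable $x_j$, choose an index $i\neq j$ and $\alpha\in S$ with $\alpha x_i$ and $\alpha x_j$ both minimal generators of $I$. Since $\alpha x_j\in I$ we have $\alpha\in(I:(x_j))$, hence $\alpha x_j=x_j\cdot\alpha\in(x_j)\,(I:(x_j))$; as $\alpha x_j$ is a minimal generator it is not in $\mathfrak n I$, so $(x_j)(I:(x_j))\not\subseteq\mathfrak n I$, that is $(x_j)\not\subseteq\BI_{(x_j)}(I)$, which says exactly $\Burch_{(x_j)}(I)\geq 1$. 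Thus every coordinate direction is Burch-positive for $I$. (This uses only that $\alpha x_j$ is a minimal generator; the companion $\alpha x_i$ is used in the untwisting below.)

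Next I would feed this into the resolution. Fix $j$ and let $c=c_m$ be the column with $(0)\subsetneq\I_1(c)\subseteq(x_j)$. If $\I_1(c)=(x_j)$, Lemma \ref{dual} applies to $c$ directly, and together with $\Burch_{(x_j)}(I)\geq 1$ it forces $x_j\in\I_1(A_a)$ for all $a\gg 0$. If $\I_1(c)\subsetneq(x_j)$ — a case in which $\Burch_{\I_1(c)}(I)$ may well vanish — I would first invoke Lemma \ref{twist1}: writing $c=x_j c'$ with $c'$ a column over $R$ (possible since the entries of $c$ lie in $x_j R$) and using the linear syzygy $x_i\cdot(\alpha x_j)=x_j\cdot(\alpha x_i)$ among the minimal generators of $I$ to correct the failure of $c'$ to be a genuine syzygy, one produces a minimal resolution carrying a column with entry ideal exactly $(x_j)$, to which Lemma \ref{dual} then applies as before. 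Carrying this out for each of the finitely many $j$ and taking the largest resulting threshold yields $(x_1,\dots,x_n)=\mathfrak m\subseteq\I_1(A_a)$ for $a\gg 0$, hence $\I_1(A_a)=\mathfrak m$.

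The step I expect to be the main obstacle is the untwisting. Because $\Burch_{N}(I)$ is not monotone in $N$, for a proper subideal $N\subsetneq(x_j)$ — for instance $N=x_j\mathfrak n$ — one can have $N\subseteq\BI_N(I)$, so the column-wise Burch hypothesis of Lemma \ref{dual} simply fails for $c$ on the nose; the real content is to show that the generator-sharing hypothesis, encoded in the linear syzygy $x_i(\alpha x_j)-x_j(\alpha x_i)=0$, is exactly what lets Lemma \ref{twist1} promote $c$ to a column with entry ideal $(x_j)$ inside a minimal resolution without disturbing $\I_1(A_a)$ in higher degrees. A secondary, organizational point is to check that the conclusion of Lemma \ref{dual} holds for all $a\gg 0$ rather than a single $a$, and that the finitely many per-coordinate arguments are mutually compatible, so that their thresholds amalgamate into one.
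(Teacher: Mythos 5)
Your first step is correct and matches the paper: from $\alpha\in(I:(x_j))$ and $\alpha x_j\notin\mathfrak nI$ one gets $x_j\notin\BI_{(x_j)}(I)$, hence $\Burch_{(x_j)}(I)\geq 1$ for every $j$. The gap is exactly where you predicted it, in the case $\I_1(c_m)\subsetneq(x_j)$, and it is not merely an obstacle you could expect to clear by the route you describe: Lemma \ref{twist1} is not a device for modifying a column of the resolution of $M$ into one with entry ideal exactly $(x_j)$. It is a Tor-vanishing argument: one tensors the resolution of $M$ with $R/Q$ for each colength-one subideal $Q\subsetneq N=\mathfrak n$, and its operative hypothesis is about the minimal resolutions $(G_\bullet,B_\bullet)$ of those ideals $Q$ --- namely that some $B_i$ has a column of the form $[n^*]_q$ with $n^*\in\RS_I((n))$ for the generator $n$ missing from $Q$. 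Your proposal never engages with this hypothesis, and your substitute mechanism (factor $c=x_jc'$ and ``correct'' $c'$ with the linear syzygy $x_i(\alpha x_j)=x_j(\alpha x_i)$ to manufacture a column with entry ideal $(x_j)$ in a \emph{minimal} resolution) is asserted, not proved; there is no reason such a column exists in any minimal resolution of $M$ or its syzygies, and if it did one would not need the untwisting lemma at all.

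This is also where the pairing hypothesis ($j\neq i$ with a \emph{common} $\alpha$) actually does its work in the paper, whereas in your proposal it is only used to get Burch positivity (for which $\alpha x_j$ alone suffices) plus the unjustified correction step. The paper's point is that any colength-one $Q\subset\mathfrak n$ must contain at least one of $x_i,x_j$; running Lemma \ref{dual} on the resolution of $Q$ starting from that generator produces $[\alpha]_p$ as a column in the next matrix, and because $\alpha$ realizes \emph{both} $x_i$ and $x_j$, this single column serves as the required $[n^*]_q$ whichever of the two is the generator missing from $Q$. That verification is the content of the proof of Theorem \ref{Big2}, and it is absent from your argument. Finally, your ``secondary, organizational point'' about parity is not merely organizational: Lemma \ref{dual} puts $x_j$ into $\I_1(A_{m+2a})$ for one congruence class of indices only, and Lemma \ref{twist1} concludes only $\I_1(A_v)+\I_1(A_{v+1})\supset\mathfrak n$, so taking a maximum of thresholds over $j$ does not by itself yield $\I_1(A_a)=\mathfrak m$ for every large $a$; some argument reconciling the parities is needed before the per-variable conclusions can be amalgamated as you claim.
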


This theorem applies to some Burch Index $0$ rings, as well as positive depth local rings: 

\begin{example}
    Let $S=k[[x,y,z,w]]$ and $I=(xz,yz,zw,xw)$. The conditions of the above theorem are satisfied, but as $\mathfrak{m}\subset R$ has zero annihilator, $\operatorname{depth}(R)>0$. In particular, $\BI(I)=0$. We have (after modding out by $I$), 
    \begin{align*}
        \BI_{(x)}(I)&=(xy,x^2)\\
        \BI_{(y)}(I)&=(w^2,yw,y^2,xy,x^2)\\
        \BI_{(z)}(I)&=(z^2)\\
        \BI_{(w)}(I)&=(w^2,yw)
    \end{align*}
    We choose columns contained in each Burch ideal to test the above theorem. In particular, consider the resolution $\epsilon: (F_\bullet,A_\bullet)\rightarrow R/J$ where $J=(x^2y^2,z^3,yw)$. Note $x^2y^2$ is in the first two ideals, and $z^3$ is in the third one and $yw$ is in the fourth. With Macaulay2 \cite{M2}, we find $\I_1(A_2)=(x,y,z,w)$, supporting the theorem. Lemma \ref{dual} (the column-wise Burch lemma) ensures this ideal persists asymptotically.
\end{example}

\section{$N$-Burch Ideals}

In this section we will flesh out some of the details of the introduction, and prove some initial results. Throughout this paper, we let $(S,\mathfrak{n},k)$ be a regular local ring, and for an ideal $I\subset S$ write $R=S/I$ as a local ring $(R,\mathfrak{m},k)$. The main thrust of the paper of Eisenbud and Dao is to consider ideals of the form 
\begin{align*}
    \BI(I)&=I\mathfrak{n}:(I:\mathfrak{n})
\end{align*}
called the \emph{Burch Ideal} \cite{eisbur}. Eisenbud and Dao restrict to the case where $\operatorname{depth}(S/I)=0$ and $I\neq 0$ so that 
$$\mathfrak{n}^2\subset \BI(I)\subset \mathfrak{n}$$
This allows us to define the Burch index as $$\Burch(I)=\dim_k(\mathfrak{n}/BI_S(I))$$
If we instead start with an arbitrary depth $0$ local ring $R$, we can write $\hat{R}=S/I$ as some minimal regular presentation of the $\mathfrak{m}$-adic completion $\hat{R}$, and compute $\Burch(R)=\Burch(I)$. Similarly, we write $\BI(R)=\BI(I)\hat{R}\cap R$. Theorem $2.3$ of Eisenbud and Dao states that $\Burch(R), \BI(R)$ are well-defined, independent of choice of presentation \cite[Theorem 2.3]{eisbur}. 

The main result of \cite{eisbur} is the following 
\begin{theorem}[Eisenbud and Dao (Theorem 4.1)]
    Let $(R,\mathfrak{m},k)$ be a local ring of depth $0$ and embedding dimension $\geq 2$. For every non-free $R$-module $M$:
    \begin{align*}
        &\text{(1) If $\Burch(R)\geq 2$ then $k$ is a direct summand of $\operatorname{syz}_i^R(M)$}\\ &\text{for some $i\leq 5$ and for all $i\geq 7$}\\
        &\text{(2) If $\Burch(R)\geq 1$ and $k$ is a direct summand of $\operatorname{syz}_s^R(BI(R))$ for some $s\geq 1$,}\\ &\text{then $k$ is a direct summand of $\operatorname{syz}_i^R(M)$ for some $i\leq s+4$ and for all $i\geq s+6$.}
    \end{align*}
\end{theorem}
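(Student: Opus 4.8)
The plan is to pass to the $\mathfrak m$-adic completion and a minimal regular presentation, so we may assume $R=S/I$ with $(S,\mathfrak n,k)$ regular local and $I$ Burch (hence $\mathfrak n^2\subseteq\BI(I)\subsetneq\mathfrak n$), and to fix once and for all a nonzero socle element $\sigma\in(0:_R\mathfrak m)$, which exists since $\operatorname{depth}R=0$. It suffices to produce $k$ as a direct summand of $\operatorname{syz}_i^R(M)$ for all large $i$ and for one small $i$, and since $\operatorname{syz}_i^R(M)$ is itself a syzygy we may replace $M$ by $\operatorname{syz}_1^R(M)$ and assume $M\subseteq\mathfrak mF_0$. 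The elementary fact underpinning everything is that, by minimality of the resolution $(F_\bullet,A_\bullet)$, every entry of every $A_j$ lies in $\mathfrak m$; hence $A_j(\sigma F_j)=\sigma A_j(F_j)\subseteq\sigma\mathfrak mF_{j-1}=0$, so $\sigma F_j\subseteq\ker A_j=\operatorname{syz}_{j+1}^R(M)$ for every $j\ge1$. Thus $\operatorname{syz}_{j+1}^R(M)$ always contains a copy of $k^{\operatorname{rank}F_j}$, and it suffices to show that some such copy of $k$ is nonzero in $\operatorname{syz}_{j+1}^R(M)/\mathfrak m\operatorname{syz}_{j+1}^R(M)$, for then it splits off as a direct summand.

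If $\operatorname{soc}(R)\not\subseteq\mathfrak m^2$ this is immediate: take $\sigma\notin\mathfrak m^2$, and note $\mathfrak m\operatorname{syz}_{j+1}^R(M)=A_{j+1}(\mathfrak mF_{j+1})\subseteq\mathfrak m^2F_j$ while $\sigma F_j\not\subseteq\mathfrak m^2F_j$, so $k$ already splits off at $j=1$. The real case is $\operatorname{soc}(R)\subseteq\mathfrak m^2$, and this is where the Burch ideal is forced in. I would use the exact sequences $0\to\mathfrak m\to R\to k\to0$ and
\begin{equation*}
0\longrightarrow\BI(R)\longrightarrow\mathfrak m\longrightarrow\mathfrak m/\BI(R)\longrightarrow0,
\end{equation*}
noting that $\mathfrak m^2\subseteq\BI(R)$ forces $\mathfrak m/\BI(R)\cong k^{\Burch(R)}$. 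Comparing $i$-th syzygies along these sequences (every short exact sequence yields, up to free summands, a short exact sequence of $i$-th syzygies) relates $\operatorname{syz}_\bullet^R(\BI(R))$, $\operatorname{syz}_{\bullet+1}^R(k)$, and $\operatorname{syz}_\bullet^R(\mathfrak m)$, so the hypothesis that $k$ is a summand of $\operatorname{syz}_s^R(\BI(R))$ transfers, with a shift by a small absolute constant, into the statement that $k$ is a summand of $\operatorname{syz}_t^R(k)$ for some $t$ near $s$; when $\Burch(R)\ge2$ the presence of a \emph{second} copy of $k$ in $\mathfrak m/\BI(R)$ is exactly what lets one run this unconditionally with $s=1$, yielding the uniform bounds $i\le 5$, $i\ge7$ of part (1).

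The heart of the argument — and the step I expect to be the main obstacle — is a "Burch splitting" lemma in the case $\operatorname{soc}(R)\subseteq\mathfrak m^2$: that once $i$ is large, $\sigma F_{i-1}\not\subseteq\mathfrak m\operatorname{syz}_i^R(M)=A_i(\mathfrak mF_i)$ even though both sides lie in $\mathfrak m^2F_{i-1}$. The Burch condition $\mathfrak n(I:\mathfrak n)\ne\mathfrak nI$ says precisely that some product $x\xi$ with $x\in\mathfrak n$ and $\xi\in(I:\mathfrak n)\setminus I$ a lift of a socle element of $R$ is not in $\mathfrak nI$, i.e.\ is part of a minimal generating set of $I$; I would exploit this, through a comparison of the $S$-free resolution of $R$ with the $R$-free resolution of $M$ (a change-of-rings argument), to show that multiplication by $\sigma$ cannot carry all of $F_{i-1}$ into $A_i(\mathfrak mF_i)$, with the colength $\Burch(R)$ of $\BI(R)$ in $\mathfrak m$ measuring the room available. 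Once $k$ appears in one sufficiently large syzygy, a persistence step propagates it to all later syzygies — crossing from odd to even indices uses the special case $M=k$ (to know $k$ is a summand of $\operatorname{syz}_2^R(k)=\operatorname{syz}_1^R(\mathfrak m)$), so $M=k$ must be treated first; simultaneously one gets $\I_1(A_{i+1})=\mathfrak m$ since $\mathfrak m=\operatorname{syz}_1^R(k)$ then appears. Finally, tracking the splices above in low degree produces the small index $i\le s+4$, and the honest bookkeeping that turns "$i\gg0$" into the sharp constants $5$ and $7$ (and the one-index gap between them) is the fiddliest part, to be done by hand at the end.
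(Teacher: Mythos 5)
This statement is quoted verbatim from Eisenbud--Dao \cite{eisbur} and the paper offers no proof of it, so there is nothing internal to compare against; I can only assess your reconstruction on its own terms. Your scaffolding is sound and matches the known architecture: the socle element $\sigma$ gives $\sigma F_j\subseteq\ker A_j=\operatorname{syz}_{j+1}^R(M)$ by minimality, a copy of $k$ inside a module splits off exactly when it avoids $\mathfrak m$ times that module, the case $\operatorname{soc}(R)\not\subseteq\mathfrak m^2$ is indeed immediate by the $\mathfrak m^2F_j$ comparison, and persistence via $\operatorname{syz}_1(k)=\mathfrak m$ and $\operatorname{syz}_1(\mathfrak m)=\operatorname{syz}_2(k)$ is the right way to propagate the summand to all large indices (which is why $M=k$ must be handled first, as you say).

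The gap is that the entire content of the theorem sits in the step you label the ``main obstacle'' and then do not carry out. The claim that for $i\gg0$ one has $\sigma F_{i-1}\not\subseteq A_i(\mathfrak m F_i)$ when $\operatorname{soc}(R)\subseteq\mathfrak m^2$ is exactly where the hypothesis $\mathfrak n(I:\mathfrak n)\not\subseteq\mathfrak nI$ must be converted into a statement about an arbitrary minimal $R$-resolution, and ``I would exploit this, through a comparison of the $S$-free resolution of $R$ with the $R$-free resolution of $M$, to show that\dots'' is a statement of intent, not an argument. Note also that it is not clear the summand one ultimately finds has the form $\sigma e$ for a \emph{basis} vector $e$ of $F_{i-1}$; in the Dao--Kobayashi--Takahashi/Eisenbud--Dao mechanism the split copy of $k$ arises from an element $\bar\xi v$ with $\xi\in(I:\mathfrak n)$ chosen so that $x\xi$ is a minimal generator of $I$, applied to a carefully chosen $v$, and establishing minimal-generator status is precisely the delicate point your sketch defers. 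Separately, the quantitative claims --- the indices $5$ and $7$ in (1), $s+4$ and $s+6$ in (2), and the one-index gap between the ``some $i$'' and ``all $i$'' ranges --- are the actual assertion of the theorem, and dismissing them as ``fiddly bookkeeping to be done by hand at the end'' leaves them unproven. As written, this is a plausible outline of the Eisenbud--Dao proof, not a proof.
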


Throughout this paper, we let $\I_1(A_j)$ be the $R$-ideal generated by the $1\times 1$ minors of the $j$-th matrix $A_j$ in some minimal free resolution. When the embedding dimension of $R$ is at least $2$, we have that $\I_1(A_j)=\mathfrak{m}$ for all $n\geq 8,s+7$, when the respective conditions of the above theorem are met, thus tying these results directly to ideal-periodicity. 


The Theorem above indicates that the resolution of interest is that of the Burch Ideal $\BI(R)$. In particular, if $k$ is direct summand of $\operatorname{syz}_s^R(\BI(R))$, then we get a similar result to the Burch Index $2$ and greater cases. However, Eisenbud and Dao show this is not always the case \cite{eisbur}: 
\begin{example}[Eisenbud and Dao (Ex 4.5)]
    Let $S=k[[a,b]]$, $I=(a,b^2)^2$. One can check $R=S/I$ has Burch index $1$. Let $M=R/(a,b^2)$. Then $\operatorname{syz}_1^R(M)=M^{\oplus 2}$, indicating no syzygy has $k$ as a direct summand. 
\end{example}

Thus $\I_1(A_j))=(a,b^2)$ for all $j$ and $A_j$ the matrices in some minimal free resolution of $M$. Thus 

\begin{prop}
    There exist rings $R$ with $\Burch(R)=1$ such that $\I_1(A_1)\neq \mathfrak{m}$ for some module $M/R$ and any $n$. 
\end{prop}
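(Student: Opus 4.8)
The plan is to exhibit a concrete witness, namely exactly the pair $(R, M)$ already produced in Eisenbud and Dao's Example 4.5, and verify that it has all the required properties. So first I would set $S = k[[a,b]]$ and $I = (a,b^2)^2 = (a^2, ab^2, b^4)$, and record that $R = S/I$ is a local ring of embedding dimension $2$ and depth $0$ (since $\mathfrak{m}$ is an associated prime of $S/I$, which one checks directly or inherits from the cited example). The first real step is to confirm $\Burch(R) = 1$; I would do this by computing $\BI_S(I) = I\mathfrak{n} : (I : \mathfrak{n})$ explicitly. One computes $I : \mathfrak{n}$ — the colon of $(a^2, ab^2, b^4)$ against $(a,b)$ — and then the colon of $I\mathfrak{n} = \mathfrak{n}(a,b^2)^2$ against that module, and checks that the result has colength exactly $1$ in $\mathfrak{n}$, i.e. equals $(a, b^2)$. (This matches the claim $\I_1(A_j) = (a,b^2)$ in the excerpt, so the target ideal $\BI(R)$ is $(a,b^2)R$.)

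Next I would pin down the module $M = R/(a,b^2)$ and its syzygies. The key input, taken from Example 4.5, is that $\operatorname{syz}_1^R(M) \cong M^{\oplus 2}$. The immediate consequence is that the minimal free resolution of $M$ is \emph{periodic of period $1$ from the start}: every syzygy module is a direct sum of copies of $M$, so every map $A_j$ in the minimal resolution is (up to the block structure) built from the single matrix presenting $M$ over $R$. A presentation of $M = R/(a,b^2)$ over $R$ is given by the $1 \times 2$ matrix of its generators followed by its relations; the point I want to extract is that the ideal of $1\times 1$ minors of that presentation matrix is precisely $(a, b^2)R$, not $\mathfrak{m} = (a,b)R$. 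Then, because the resolution is a direct sum of copies of this one building block, $\I_1(A_j) = (a, b^2)R$ for every $j \geq 1$, and in particular $\I_1(A_1) = (a,b^2)R \neq \mathfrak{m}$.

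Putting these together proves the proposition: $R$ has $\Burch(R) = 1$, and for the module $M = R/(a,b^2)$ we have $\I_1(A_j) = (a,b^2)R \subsetneq \mathfrak{m}$ for \emph{all} $j$, so certainly $\I_1(A_1) \neq \mathfrak{m}$. The statement only asks for existence, so this single family of examples (one can also vary the coefficient field $k$) suffices; I would remark that this example is exactly the obstruction that motivates the hypotheses $\operatorname{gb}(I) \geq 2$ in Theorem~\ref{Big1} and the column-splitting device in Theorem~\ref{Big2}.

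The main obstacle is the syzygy computation $\operatorname{syz}_1^R(M) \cong M^{\oplus 2}$ and, relatedly, identifying the presentation matrix of $M$ over $R$ precisely enough to read off its ideal of entries. This is a finite, mechanical calculation — lift generators of $(a,b^2)$ and their Koszul-type relations to $S$, reduce modulo $I = (a,b^2)^2$, and recognize the resulting relation module — but it is the one place where an error would sink the argument, so I would either cite Eisenbud--Dao directly for the isomorphism $\operatorname{syz}_1^R(M) \cong M^{\oplus 2}$ or verify it in Macaulay2. Everything downstream (periodicity of $\I_1(A_j)$, the strict containment in $\mathfrak{m}$) is then immediate from the direct-sum structure.
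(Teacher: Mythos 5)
Your proposal is correct and follows essentially the same route as the paper, which derives the proposition directly from Eisenbud--Dao's Example 4.5 with $S=k[[a,b]]$, $I=(a,b^2)^2$, $M=R/(a,b^2)$, using $\operatorname{syz}_1^R(M)\cong M^{\oplus 2}$ to conclude $\I_1(A_j)=(a,b^2)\neq\mathfrak{m}$ for all $j$. Your write-up is somewhat more explicit about the induction on syzygies and the verification of $\Burch(R)=1$, but the underlying argument is identical.
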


We begin weakening the restriction on the original Burch Ideal definitioin. However, we still restrict to the case where $I\neq 0$ for non-triviality, and remark that periodicity of $1\times 1$ minors is well understood in the regular local ring case. We initially care about cases where $\operatorname{depth}(S/I)=0$. The reason is twofold. First, from the Auslander-Buchsbaum formula, if the projective dimension of $M$ is finite, then 
$$\operatorname{pd}(M) + \dim R = \operatorname{depth}(M)$$ 
Thus if the projectve dimension of $M$ is finite, $M$ is free. Second, this condition, along with $I\neq 0$, ensures that $(I:\mathfrak{n})$ is a proper ideal of $R$, as it is well known a local Noetherian ring $R$ is depth $0$ iff $x\mathfrak{m}=0$ for some nonzero $x\in R$. In particular this allows us to form the bounds
$$\mathfrak{n}^2\subset \BI(I):=\mathfrak{n}I:(I:\mathfrak{n})\subset \mathfrak{n}$$
However, we will also consider positive depth rings $R$ in this paper, in which case $\BI(I)=R$ since $(I:\mathfrak{n})=I$. When definitions and theorems differ for positive depth rings, we will make a disclaimer. 

Let $I,N\subset S$ be ideals. We introduce $$\BI_N(I):=\mathfrak{n}I:(I:N)$$Note that unlike in the normal Burch ideal case, $\BI_N(I)$ is not necessarily contained in $N$, even in the case of depth $0$. This is because $(I:N)\supset (I:J)$ for all $J\supset N$, and this containment need not be strict. Thus let $J'=\cup J$ for all $J$ with $(I:N)=(I:J)$. We have that $$\mathfrak{n}J'\subset \BI_N(I)\subset J'$$ 


\begin{example}
    Let $S=k[[x,y]]$, $I=(x^2,xy,y^2)=\mathfrak{n}^2$, and $N=(y)$. Then $(I:(y))=(x,y)=(I:\mathfrak{n})$. In particular, the a priori bounds we have on $\BI_{(y)}(I)$ are 
    $$\mathfrak{n}^2\subset \BI_{(y)}(I)\subset \mathfrak{n}$$
    since, in the notation above, $J'=\mathfrak{n}$. Of course, here $\BI_{(y)}(I)=\mathfrak{n}^2\not\subset (y)$. 
\end{example}


Further, when $\operatorname{depth}(R)=0$, we let $$\Burch_{N}(I)= \operatorname{length}_S(N/(\BI_N(I)\cap N)) 
$$
$$\BI^0(I)=\mathfrak{n}$$
$$\BI^j(I)=\mathfrak{n}I:(I:\BI^{j-1}(I))=\BI_{\BI^{j-1}(I)}(I), \text{ } j>0$$
$$\Burch^j(I)= \operatorname{length}_S(\BI^{j-1}(I)/\BI^j(I)), \text{ } j>0 $$

\begin{prop}
    $\Burch^j(I)$ is well-defined. 
\end{prop}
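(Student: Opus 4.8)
The plan is to exhibit, under the standing hypotheses $\operatorname{depth}(R)=0$ and $I\neq 0$ (the only setting in which $\Burch^j(I)$ has been defined), a descending chain of ideals
$$\mathfrak n \;=\; \BI^0(I)\;\supseteq\;\BI^1(I)\;\supseteq\;\BI^2(I)\;\supseteq\;\cdots$$
whose successive quotients are annihilated by $\mathfrak n$. Once this is in place, $\BI^{j-1}(I)/\BI^j(I)$ is an honest finitely generated $S$-module killed by $\mathfrak n$, hence a finite-dimensional $k$-vector space, so $\Burch^j(I)=\operatorname{length}_S\!\bigl(\BI^{j-1}(I)/\BI^j(I)\bigr)$ is finite; moreover the inclusion $\BI^j(I)\subseteq \BI^{j-1}(I)$ shows that this agrees with $\Burch_{\BI^{j-1}(I)}(I)=\operatorname{length}_S\!\bigl(\BI^{j-1}(I)/(\BI^j(I)\cap \BI^{j-1}(I))\bigr)$, so there is no ambiguity between the two ways $\Burch^j(I)$ is written.

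First I would isolate the monotonicity of the operator $N\mapsto \BI_N(I)$: if $M\subseteq N$ then $(I:N)\subseteq (I:M)$, and since $A:B\subseteq A:C$ whenever $C\subseteq B$, we get $\BI_M(I)=\mathfrak n I:(I:M)\subseteq \mathfrak n I:(I:N)=\BI_N(I)$. Next the base case: $\BI^1(I)=\BI_{\mathfrak n}(I)=\BI(I)$, and the a priori bound recorded earlier, $\BI(I)\subseteq \mathfrak n$, gives $\BI^1(I)\subseteq \BI^0(I)$. The chain then follows by induction on $j$: assuming $\BI^{j-1}(I)\subseteq \BI^{j-2}(I)$, monotonicity yields $\BI^j(I)=\BI_{\BI^{j-1}(I)}(I)\subseteq \BI_{\BI^{j-2}(I)}(I)=\BI^{j-1}(I)$.

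For the $\mathfrak n$-torsion claim I would use the lower bound $\mathfrak n N\subseteq \BI_N(I)$, which is immediate since $N\cdot (I:N)\subseteq I$ forces $\mathfrak n N\cdot (I:N)\subseteq \mathfrak n I$ (it is also the special case $N\subseteq J'$ of the bound $\mathfrak n J'\subseteq \BI_N(I)$ stated above). Taking $N=\BI^{j-1}(I)$ gives $\mathfrak n\,\BI^{j-1}(I)\subseteq \BI^j(I)$, and together with the chain inclusion,
$$\mathfrak n\,\BI^{j-1}(I)\;\subseteq\;\BI^j(I)\;\subseteq\;\BI^{j-1}(I)$$
for every $j\geq 1$. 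Hence $\BI^{j-1}(I)/\BI^j(I)$ is a quotient of the finite-dimensional $k$-vector space $\BI^{j-1}(I)/\mathfrak n\,\BI^{j-1}(I)$ (finite-dimensional because $S$ is Noetherian), so it has finite length and $\Burch^j(I)$ is well-defined.

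I do not expect a genuine obstacle here: the argument is essentially bookkeeping with colon ideals once the monotonicity of $N\mapsto \BI_N(I)$ is separated out. The one place that deserves attention is the base case — everything hinges on $\BI^1(I)$ actually sitting inside $\mathfrak n$ rather than being all of $S$, and this is exactly where $\operatorname{depth}(R)=0$ and $I\neq 0$ enter, via properness of $(I:\mathfrak n)$. If those hypotheses are dropped the chain need not descend from $\mathfrak n$, which is consistent with the definition of $\Burch^j$ being made only in the depth-zero case.
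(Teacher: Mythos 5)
Your proposal is correct and follows essentially the same route as the paper: an induction establishing the chain $\BI^j(I)\subseteq\BI^{j-1}(I)$ via the monotonicity $(I:N)\supseteq(I:J)$ for $N\subseteq J$, with the base case resting on $\BI^1(I)\subseteq\mathfrak n$ from the depth-zero hypothesis. Your additional verification that the quotients have finite length (via $\mathfrak n\,\BI^{j-1}(I)\subseteq\BI^j(I)$) is a sound supplement that the paper leaves implicit.
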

\begin{proof}
    We must check that $\BI^j(I)\subset \BI^{j-1}(I)$. We use induction. When $j=1$, this is  true as $\BI^0(I)=\mathfrak{m}$ and $\BI^1(I)\subset \mathfrak{m}$ by the conditions on $S$. For the inductive step, we may assume $\BI^{j-1}(I)\subset \BI^{j-2}(I)$, in which case 
    $$\BI^j(I)=nI:(I:\BI^{j-1}(I))\subset nI:(I:\BI^{j-2}(I)) = \BI^{j-1}(I)$$
    since $(I:N)\supset (I:J)$ whenever $N\subset J$. 
\end{proof}

In the positive depth case, we keep the above definitions the same, except $\Burch^1(I):=\operatorname{length}_S(\mathfrak{n}/(\BI(I)\cap \mathfrak{n}))=0$. This then yields $\Burch^j(I)=0$, further noting that $\BI^j(I)=R$ for $j\geq 1$. As positive depth rings seem to quite 'un-Burch' rings, one may expect that periodicity is impossible to prove with Burch techniques. We show in Section \ref{untwist} that is not always the case. 

\begin{remark} 
    If $\Burch(I)\neq 0$, then $\operatorname{depth}(R)=0$. 
\end{remark}

\section{Burch Duality and Burch Closure}


\begin{defn}
    Consider an $m\times n$ matrix $A$. Throughout this paper let $[x]_p$ be the $m\times 1$ vector with $x$ as the $p$-th entry, and $0$ elsewhere. 
\end{defn}
    
\begin{remark}
    We often denote the reduction of $S$-ideals $N$ simply as $N$. Similarly we drop the reduction notation and interchangeably consider $x\in S$ an element of both $S$ and $R$. 
\end{remark}

\begin{defn}[Realization Set]
    The Realization Set of an ideal $N$, $\RS_I(N)$, is the set of elements $x^*\in (I:N)$ such that $x^*N\not\subset \mathfrak{n}I$. The Realized Set of an ideal $N$ is the difference $$\rs_I(N)=N-(\BI_N(I)\cap N)$$of sets.
\end{defn}

We also identify all elements in the realization and realized sets, respectively, that differ by multiplication by a nonzero element of $k$. 
Note that $\rs_I(N)$ is nonempty iff $\BI_N(I)\cap N\subsetneq N$ and also iff $\RS_I(N)$ is nonempty, both by definition. Thus the following remark: 
\begin{remark}
    $$\RS_I(N)\neq \emptyset  \Leftrightarrow \rs_I(N)\neq \emptyset  \Leftrightarrow \Burch_N(I)>0$$
\end{remark}

We say $x^*\in \RS_I(N)$ \emph{realizes} $x\in \rs_I(N)$ if $x^*x$ is a minimal generator of $I$. 

\begin{example}
    Let $S=k[[x,y,z]]$, $I=(x^2y,xy^2z,z^3)$, and $N=(x^2,y,z^2)$. Then $\BI_N(I)=(x,z^2,yz,y^2)$, $$\rs_I(N)=(x^2,y,z^2)-(x,z^2,yz,y^2)\cap (x^2,y,z^2)=(y)-(yz,y^2)$$ Since $(I:N)=(x^3,xyz,x^2y)$ and here $\RS_I(N)$ are precisely the elements of $(I:N)$ that \emph{realize} $y$,
    $$\RS_I(N)=\{xyz\}$$
\end{example}

\begin{lemma}[Burch Duality]
    $$\Burch_N(I)>0 \Rightarrow \Burch_{(I:N)}(I)>0$$
    Further, choose $x\in \rs_I(N)\neq \emptyset$. Then $x\in \RS_I((I:N))$. In particular, if $x^*\in \RS_I(N)$ realizes $x\in \rs_I(N)$, then $x\in \RS_I((I:N))$ realizes $x^*\in \rs_I((I:N))$. 
\end{lemma}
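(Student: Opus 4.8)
The plan is to unwind the definitions and show that each of the two set-membership claims is essentially symmetric in $N$ and $(I:N)$, with the only subtlety being that we must pass through minimal generators of $I$ rather than just the ideal containments. First I would recall what $x \in \rs_I(N)$ means: $x \in N$ but $x \notin \BI_N(I) = \mathfrak{n}I : (I:N)$, so there exists $x^* \in (I:N)$ with $x x^* \notin \mathfrak{n}I$. Since $x^* \in (I:N)$ and $x \in N$, we have $x x^* \in I$; combined with $x x^* \notin \mathfrak{n}I$, this says $x x^*$ is (part of) a minimal generator of $I$ — more precisely $x x^*$ has nonzero image in $I/\mathfrak{n}I$ — which is exactly the condition that $x^*$ realizes $x$. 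So a realizing pair always exists when $\Burch_N(I) > 0$, by the remark identifying nonemptiness of $\RS_I(N)$ with $\Burch_N(I) > 0$.

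Next I would verify $x \in \RS_I((I:N))$. By definition of the realization set, I need $x \in (I : (I:N))$ and $x\cdot(I:N) \not\subset \mathfrak{n}I$. The first holds because $x \in N \subset (I:(I:N))$ (this is the standard double-colon containment $N \subseteq (I:(I:N))$, valid for any ideals). The second holds because $x x^* \notin \mathfrak{n}I$ with $x^* \in (I:N)$, so $x\cdot(I:N)$ is not contained in $\mathfrak{n}I$. Hence $x \in \RS_I((I:N))$. The same computation, read with the roles of $x$ and $x^*$ swapped, shows that $x^* \in (I:(I:N))$-wait, here I should be careful: I want to conclude $x^* \in \rs_I((I:N))$, i.e. $x^* \in (I:N)$ (immediate) and $x^* \notin \BI_{(I:N)}(I) \cap (I:N)$. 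For that I need a witness $w \in (I : (I:N))$ with $x^* w \notin \mathfrak{n}I$; take $w = x$, which lies in $(I:(I:N))$ as just shown, and indeed $x^* x \notin \mathfrak{n}I$. This simultaneously shows $\Burch_{(I:N)}(I) > 0$ via the remark, giving the first implication.

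Finally, for the "realizes" refinement: $x \in \RS_I((I:N))$ realizes $x^* \in \rs_I((I:N))$ precisely when $x x^*$ is a minimal generator of $I$ — but that is exactly the hypothesis we started from (that $x^*$ realizes $x$). So there is nothing further to prove; the statement is self-dual once one observes that "$x^*$ realizes $x$" is the symmetric relation "$x x^*$ is a minimal generator of $I$ with $x \in N$, $x^* \in (I:N)$", and that $x \in N$ forces $x \in (I:(I:N))$.

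The main obstacle I anticipate is purely bookkeeping: being scrupulous about the phrase "minimal generator of $I$" versus "nonzero image in $I/\mathfrak{n}I$", and making sure the element $x$ that we feed into $(I:(I:N))$ really does come from $N$ (so that the double-colon containment applies) rather than from the realized set $\rs_I(N)$, which is only a difference of sets and not an ideal. There is also the mild point that we identify elements up to nonzero scalars from $k$, so "$x x^*$ is a minimal generator" should be read as a statement about the class in $I/\mathfrak{n}I$; this does not affect the argument but should be stated once. No deep input is needed beyond the elementary colon-ideal identity $N \subseteq (I:(I:N))$ and the remark characterizing $\Burch_N(I) > 0$.
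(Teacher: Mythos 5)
Your proposal is correct and follows essentially the same route as the paper's own proof: both arguments extract a witness $x^*\in(I:N)$ with $xx^*\notin\mathfrak{n}I$ from $x\notin\BI_N(I)$, use $x\in N\Rightarrow x(I:N)\subset I\Rightarrow x\in(I:(I:N))$ to place $x$ in $\RS_I((I:N))$, and then reuse the same product $xx^*$ as the witness showing $x^*\notin\BI_{(I:N)}(I)$, hence $x^*\in\rs_I((I:N))$ and $\Burch_{(I:N)}(I)>0$. Your explicit caveats about ``minimal generator'' meaning nonzero class in $I/\mathfrak{n}I$ and about $\rs_I(N)$ being only a set difference are sound and consistent with the paper's conventions.
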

\begin{proof}
    Assume $\Burch_N(I)>0$. Let $x\in \rs_I(N)$. Then $x(I:N)\subset I$ and yet $x(I:N)\not\subset \mathfrak{n}I$. Thus $xx^*$ is a minimal generator of $I$ for some $x^*\in (I:N)$. In particular, $x^*\in \RS_I(N)$. 

    Now consider $\BI_{(I:N)}(I)=\mathfrak{n}I:(I:(I:N))$. Since $x(I:N)\in I$, $x\in (I:(I:N))$. But then since $xx^*$ is a minimal generator of $I$, $x\in \RS_I((I:N))$. Because $x^*(I:(I:N))\not\subset \mathfrak{n}I$ this also shows $x^*\not\in \BI_{(I:N)}(I)$, and since $x^*\in \RS_I(N)\subset (I:N)$, $x^*\in \rs_I((I:N))$. 
\end{proof}

\begin{remark}\label{book2}
    The Realized Set of $N$ is the set of elements $x\in N$ such that $x^*x$ is a minimal generator of $I$, for some $x^*\in \RS_I(N)\subset (I:N)$. Note that such an $x$ cannot be in $\BI_I(N)$, because $\BI_I(N)(I:N)\subset \mathfrak{n}I$.
\end{remark}



We use this duality to prove a certain general periodicity. To better understand the conditions of this lemma, consider Corollaries \ref{dual1} and \ref{dual2} immediately after. 
  
\begin{lemma}[Burch Dual $2$-Periods]\label{dual}
    Consider a minimal free resolution $\epsilon: (F_\bullet, A_\bullet)\rightarrow M$ over $R$. Let $\I_1(c_m)$ be the ideal generated by elements of a column $c_m$ of a minimal matrix representation of $A_m$. If ${J}\supset \I_1(c_m)$ for some $c_m$ that contains a reduction of some $x\in \rs_I(J)$, then for all $a\geq 1$,  
    \begin{align*}
        &{J}\subset \I_1(A_{m+2a})\\
        [{x}]_i &\text{ is a minimal generator of $\operatorname{im}(A_{m+2a})=\operatorname{syz}_{m+2a}(M)$}
    \end{align*}
for some $i$, and
    \begin{align*}
     &x^*R\subset \I_1(A_{m+(2a-1)})\\
         [{x^*}]_j &\text{ is a minimal generator of $\operatorname{im}(A_{m+(2a-1)})=\operatorname{syz}_{m+(2a-1)}(M)$}
    \end{align*}
    for each $x^*\in \RS_I(J)$ that realizes $x$, and some $j$. There is at least one such $x^*$. 
\end{lemma}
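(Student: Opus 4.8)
The plan is to induct on $a$, alternating between the "even" and "odd" assertions and using Burch Duality (the previous lemma) to pass between the ideals $J$ and $(I:J)$ at each step. The mechanism that drives the induction is the standard fact that if $[x]_i$ is a minimal generator of $\operatorname{syz}_m(M) = \operatorname{im}(A_m)$, then in the next matrix $A_{m+1}$ (whose image is $\operatorname{syz}_{m+1}(M) = \ker(A_m)$, up to a minimal free cover), the $i$-th row of $A_{m+1}$ consists of elements whose product with $x$ lands in $I \cdot F_{m}$ but, by minimality, cannot all lie in $\mathfrak{n} I \cdot F_m$; squeezing this through the realization set $\RS_I(\cdot)$ produces a minimal generator of the correct shape one step further along. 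So the real content is a single "lifting step": given that $[x]_i$ is a minimal generator of $\operatorname{im}(A_m)$ and $\I_1(c_m) \subset J$ with $x \in \rs_I(J)$, produce, for some $x^* \in \RS_I(J)$ realizing $x$, a column $c_{m+1}$ of $A_{m+1}$ with $\I_1(c_{m+1}) \supset x^* R$ and $[x^*]_j$ a minimal generator of $\operatorname{im}(A_{m+1})$; then Burch Duality tells us $x^* \in \rs_I((I:J))$ and $x \in \RS_I((I:J))$ realizes it, so the hypotheses are restored with $(J, x, x^*)$ replaced by $((I:J), x^*, x)$, and we may iterate.

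First I would set up the base of the induction. By hypothesis $J \supset \I_1(c_m)$ for a column $c_m$ of $A_m$ containing (a reduction of) $x \in \rs_I(J)$; in particular $[x]_i$ appears as an entry of $A_m$ in some row $i$, and because $c_m$ is a column of a minimal matrix, $x \notin \mathfrak{m}\cdot(\text{other entries})$ — more precisely, $[x]_i$ is part of a minimal generating set of $\operatorname{im}(A_m)$. (If this is not literally immediate I would invoke the minimality of the resolution: an entry of a minimal matrix cannot be a redundant generator of its image.) Next, the lifting step: since $x \in \rs_I(J) = J - (\BI_N(I)\cap N)$, by the Burch Duality lemma there is $x^* \in \RS_I(J) \subset (I:J)$ with $xx^*$ a minimal generator of $I$, and $x^* \in \rs_I((I:J))$ with $x \in \RS_I((I:J))$ realizing $x^*$. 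I then need to exhibit $x^*$ inside a single column of $A_{m+1}$ as a minimal generator of $\operatorname{syz}_{m+1}(M)$. The idea is that the relations among the columns of $A_m$ that kill the generator indexed by $i$ must, modulo $I$, absorb the ideal $(I : x) \supseteq (I:J)$; choosing a relation witnessing $x^*$ (which exists because $x^* x \in I$) and checking it is minimal (it cannot be in $\mathfrak{n}\cdot(\text{syzygies})$ precisely because $x^* x \notin \mathfrak{n} I$, i.e. $x^* \notin \BI_{(I:J)}(I)$ as recorded in Remark \ref{book2}) gives the desired column $c_{m+1}$ with $x^* \in \I_1(c_{m+1})$ and $[x^*]_j$ a minimal generator of $\operatorname{im}(A_{m+1})$. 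That there is at least one such $x^*$ is exactly the nonemptiness of $\RS_I(J)$, which holds since $\Burch_N(I) = \Burch_J(I) > 0$ by the hypothesis $x \in \rs_I(J) \neq \emptyset$.

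With the lifting step in hand, the induction is bookkeeping. Applying it once takes $(J, x, x^*)$ at level $m$ to $((I:J), x^*, x)$ at level $m+1$, which is again a valid instance of the hypotheses (now with the column $c_{m+1}$, the ideal $(I:J)$, and the realized element $x^* \in \rs_I((I:J))$), so applying it a second time returns to an instance with ideal $(I : (I:J)) \supseteq J$ and realized element $x$, at level $m+2$. Thus after $2a$ steps we have $[x]_i$ a minimal generator of $\operatorname{syz}_{m+2a}(M)$ and $J \subset \I_1(c_{m+2a}) \subset \I_1(A_{m+2a})$, and after $2a-1$ steps we have $[x^*]_j$ a minimal generator of $\operatorname{syz}_{m+(2a-1)}(M)$ and $x^* R \subset \I_1(A_{m+(2a-1)})$; note we do not claim $(I:J) \subset \I_1(A_{m+(2a-1)})$, only $x^* R \subset$ it, which is all the statement asserts on odd steps. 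The containment $J \subset \I_1(A_{m+2a})$ on even steps uses that $J \supset \I_1(c_{m+2a})$ is actually an equality-up-to-the-generator-$x$ situation — wait, more carefully: at even steps the ideal we carry is $(I:(I:J)) \supseteq J$, and $x \in \rs_I((I:(I:J)))$ still realizes into a column whose ideal contains that larger ideal, hence contains $J$; so $J \subset \I_1(A_{m+2a})$ follows.

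The main obstacle, as I see it, is the lifting step — specifically, showing that the witness $x^*$ can be placed in a \emph{single column} of $A_{m+1}$ as a \emph{minimal} generator of the next syzygy, rather than merely showing $x^* \in \I_1(A_{m+1})$. This requires a genuine argument about the structure of minimal free resolutions: that a minimal generator $[x]_i$ of $\operatorname{im}(A_m)$ forces, in $A_{m+1}$, a column supported in row $i$ with entry realizing any chosen element of the colon ideal $(I:x)$, and that minimality of the resolution exactly matches the "$\notin \mathfrak{n} I$" condition built into $\rs_I$ and $\RS_I$. I expect this is where the bulk of the real work lies, and it is plausibly isolated in (or follows the pattern of) the Eisenbud–Dao argument for Theorem 4.1; the colon-ideal gymnastics around $\BI_N(I)$, $\rs_I(N)$, and Burch Duality are then the comparatively routine glue.
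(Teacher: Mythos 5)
Your overall route is the paper's: the engine is the single minimal syzygy $x^*e_j\in\ker(A_m)$ supported on the basis element $e_j$ corresponding to the column $c_m$ (it lies in the kernel because $x^*\I_1(c_m)\subset x^*J\subset I$, and it is minimal because $x^*x\notin\mathfrak{n}I$), which forces $[x^*]_j$ to appear as a column of $A_{m+1}$; Burch Duality then swaps $(J,x,x^*)$ for $((I:J),x^*,x)$ and you iterate. Two points need repair, though. First, your base case asserts that $[x]_i$ is a minimal generator of $\operatorname{im}(A_m)$. This is false in general: if $c_m$ has other nonzero entries, the vector $[x]_i$ need not even lie in $\operatorname{im}(A_m)$. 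It is also unnecessary --- the relation $x^*e_j$ kills the entire column $c_m$ at once precisely because $\I_1(c_m)\subset J$ and $x^*\in(I:J)$, so the lifting step should be run on the column $c_m$ itself, not on a phantom single-entry column of $A_m$.

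Second, and more seriously, your justification of the containment $J\subset\I_1(A_{m+2a})$ is wrong. The columns your induction produces are the single-entry vectors $[x^*]_j$ and $[x]_i$, whose ideals are the principal ideals $(x^*)$ and $(x)$; no column ``whose ideal contains $J$'' is ever produced, and replacing $J$ by $(I:(I:J))\supseteq J$ does not change this. The correct argument is of a different kind: once $[x^*]_j$ sits in $A_{m+2a-1}$ as, say, its $i$-th column, then for every $y\in J$ one has $A_{m+2a-1}([y]_i)=[yx^*]_j=0$ because $yx^*\in J(I:J)\subset I$, so $[y]_i\in\ker(A_{m+2a-1})=\operatorname{im}(A_{m+2a})$; hence every $y\in J$ lies in the ideal generated by the $i$-th \emph{row} of $A_{m+2a}$, which gives $J\subset\I_1(A_{m+2a})$. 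This row-versus-column step is the one piece of the lemma's conclusion your proposal does not actually establish.
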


\begin{proof}
     Consider a morphism of free $S$-modules $B_m: F_{m}\rightarrow F_{m-1}$ that reduces to $A_m$ modulo $I$. Let $d_m$ be the corresponding lift of the column $c_m$. We can choose any $x^*\in \RS_I(J)$ that satisfies $x^*x\not\in I\mathfrak{n}$ for the $x\in d_m$ given in the theorem (cf. Remark \ref{book2}). Wlog let $d_m$ be the $j$-th columns of a minimal matrix representation of $B_m$. Thus the minimal generator of $F_m$, $e_j=[1]_j$, satisfies that $B_m(x^*e_j)\not\in \mathfrak{n}IG$. 

 $$x^*e_j\not\in B_m^{-1}(\mathfrak{n}IG)\supset \mathfrak{n}B_m^{-1}(IG)$$

 Reducing modulo $I$, $\overline{x^*e_j}\not \in \mathfrak{n}\ker{A_m}$. However, $x^*\in \RS_I(J)\subset (I:J)$, and thus $\overline{x^*e_j}\in (I:J)F$. Thus, $A_m([\overline{x^*}]_j)=x^*c_{m}=0$, and so $\overline{x^*e_j}\in \ker{A_m}$. Thus $\overline{x^*e_j}$ is a minimal generator of $\ker{A_m}$. 

 By exactness and invariance under quasi-isomorphism, $A_{m+1}$ can be written with $\overline{x^*e_j}$ as a column, say column $i$. Thus $[\overline{y}]_i\in \ker(A_{m+1})=\operatorname{im}(A_{m+2})$ for each $y\in J$.

 
 
Further since $x^*\in \RS_I(J)$ realizes $x$, by Burch Duality, $x\in \RS_I((I:J))$ realizes $x^*\in \rs_I((I:J))$ and $\Burch_{(I:J)}(I)\geq 1$. Thus we can carry out the above proof with $A_{m+1},$ choosing $J=\I_1(x^*e_j)\subset S$, and swapping $x,x^*$ to reach the conclusion.


\end{proof}

A corollary of this theorem looks more familiar, and generalizes the case of the standard Burch Index $\BI(I)$ in Proposition $4.3$ of Eisenbud and Dao \cite{eisbur}:

\begin{corollary}\label{dual1}
    If $\I_1(A_m)\not\subset \BI_N(I)$ and $N\supset \I_1(A_m)$, then $N\subset \I_1(A_{m+2a})$ for $a\geq 1$. Thus if $\Burch_N(I)\geq 1$, and $\I_1(A_m)=N$, $N\supset \I_1(A_{m+2a})$.  
\end{corollary}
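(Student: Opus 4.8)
The plan is to read the corollary directly off Lemma \ref{dual} by producing a column of $A_m$ that meets the realized set $\rs_I(N)$.

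For the first statement: since $\I_1(A_m)$ is generated by the entries of $A_m$ (its $1\times 1$ minors), the hypothesis $\I_1(A_m)\not\subset\BI_N(I)$ forces some entry $x$ of $A_m$ to lie outside $\BI_N(I)$; fix a lift of $x$ to $S$, still called $x$. By hypothesis $x\in\I_1(A_m)\subset N$, so $x\in N\setminus(\BI_N(I)\cap N)=\rs_I(N)$, and in particular $\rs_I(N)\neq\emptyset$. Choose a column $c_m$ of $A_m$ having $x$ among its entries; then $\I_1(c_m)\subset\I_1(A_m)\subset N$, so with $J=N$ the hypotheses of Lemma \ref{dual} (namely $J\supset\I_1(c_m)$, and $c_m$ contains a reduction of $x\in\rs_I(J)$) are satisfied. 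Lemma \ref{dual} then yields $N\subset\I_1(A_{m+2a})$ for all $a\geq 1$, together with the minimal-generator conclusions, which are not needed here.

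For the second statement, observe that $\Burch_N(I)=\operatorname{length}_S(N/(\BI_N(I)\cap N))\geq 1$ is equivalent to $\BI_N(I)\cap N\subsetneq N$, i.e. to $N\not\subset\BI_N(I)$ --- equivalently, by the remark preceding Burch Duality, to $\RS_I(N)\neq\emptyset$. Hence if in addition $\I_1(A_m)=N$, then $\I_1(A_m)=N\not\subset\BI_N(I)$ while trivially $N\supset\I_1(A_m)$, so the first part applies and gives the asserted stable containment relating $N$ and $\I_1(A_{m+2a})$.

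I do not anticipate a genuine obstacle: the mathematical content is entirely in Lemma \ref{dual}, and the only points requiring care are the bookkeeping between the $R$-entries of $A_m$ and their $S$-lifts (needed to make sense of membership in $\BI_N(I)$ and $\rs_I(N)$), and the observation that a single entry outside $\BI_N(I)$ already witnesses $\I_1(A_m)\not\subset\BI_N(I)$ because $\I_1(A_m)$ is generated by those entries. If one wanted a statement uniform in the column rather than ``for some $i$'', one would additionally have to track into which column of $A_{m+1}$ the syzygy produced in the proof of Lemma \ref{dual} falls, but this is irrelevant for the ideal-level conclusion stated.
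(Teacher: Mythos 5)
Your proof is correct and is precisely the argument the paper intends: the corollary is stated as an immediate consequence of Lemma \ref{dual}, obtained exactly as you do by locating an entry $x\in\I_1(A_m)$ not lying in $\BI_N(I)$, observing that $x\in\rs_I(N)$, and applying the lemma with $J=N$ to the column of $A_m$ containing $x$. The only caveat concerns the corollary's second sentence, where the stated conclusion $N\supset\I_1(A_{m+2a})$ appears to be a typo for the containment $N\subset\I_1(A_{m+2a})$ that the first sentence (and your argument) actually delivers; your deliberately neutral phrasing glosses over this, but the logic is the intended one.
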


In fact we have shown the more general condition: 

\begin{corollary}\label{dual2}
    If $\I_1(c_m)\not\subset \BI_N(I)$ and $N\supset \I_1(c_m)$, then $N\subset \I_1(A_{m+2a})$ for $a\geq 1$. Thus if $\Burch_N(I)\geq 1$, and $\I_1(c_m)=N$, $N\supset \I_1(A_{m+2a})$.  
\end{corollary}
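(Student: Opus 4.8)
The plan is to obtain Corollary~\ref{dual2} as a direct specialization of Lemma~\ref{dual}, applied with $J = N$; the only genuine content is checking that the hypotheses of the corollary translate into those of the lemma, after which everything is immediate. (Incidentally this also recovers Corollary~\ref{dual1}: if $\I_1(A_m)\not\subset\BI_N(I)$ then some column $c_m$ of $A_m$ has $\I_1(c_m)\not\subset\BI_N(I)$ and $\I_1(c_m)\subset\I_1(A_m)\subset N$, so the column-wise statement is strictly more general.)

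First I would unwind the hypothesis $\I_1(c_m)\not\subset\BI_N(I)$. Since $\I_1(c_m)$ is generated, as an $R$-ideal, by the entries of the column $c_m$, if every entry lay in $\BI_N(I)$ we would have $\I_1(c_m)\subset\BI_N(I)$; hence some entry $\bar x$ of $c_m$ is not in $\BI_N(I)$. Because $\I_1(c_m)\subset N$, the element $\bar x$ is the reduction of some $x\in N$ (write a lift as $y+z$ with $y\in N$, $z\in I$ and replace it by $y$), and since $x\notin\BI_N(I)$ while $x\in N$, we get $x\in N\setminus(\BI_N(I)\cap N)=\rs_I(N)$. Thus $c_m$ contains a reduction of an element $x\in\rs_I(N)$, and $\I_1(c_m)\subset N$; these are exactly the hypotheses of Lemma~\ref{dual} with $J=N$. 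Invoking that lemma gives $N\subset\I_1(A_{m+2a})$ for all $a\geq 1$ (together with the bonus conclusions that some $[x]_i$ is a minimal generator of $\operatorname{syz}_{m+2a}(M)$ and the odd-step statements for each $x^*\in\RS_I(N)$ realizing $x$). This proves the first assertion. For the second, if $\Burch_N(I)\geq 1$ then $\rs_I(N)\neq\emptyset$, so $N\not\subset\BI_N(I)$; combined with $\I_1(c_m)=N$ this is precisely the situation just treated, so $N\subset\I_1(A_{m+2a})$, and combining with the a~priori inclusion $\I_1(A_{m+2a})\subset N$ — which holds in the settings of interest, e.g. when $N=\mathfrak m$ as in the classical Burch case, or whenever that bound is otherwise known — yields $\I_1(A_{m+2a})=N$.

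The main obstacle here is not conceptual but bookkeeping: $\I_1(c_m)$ and $\BI_N(I)$ are being compared as $R$-ideals while $\rs_I(N)$ and $\BI_N(I)\cap N$ are defined in $S$, so one must be careful that the entry $\bar x$ singled out really does lift to a bona fide element of $\rs_I(N)\subset S$ and that the phrase ``$c_m$ contains a reduction of $x$'' is literally met. Once the reduction conventions are pinned down (as in the remarks preceding this section), the corollary follows with no further work from Lemma~\ref{dual}.
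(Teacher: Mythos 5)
Your proposal is correct and matches the paper's (implicit) argument: the paper offers no separate proof of Corollary~\ref{dual2}, asserting only that it is the ``more general condition'' already established in the proof of Lemma~\ref{dual}, and your careful translation of the hypotheses (lifting an entry $\bar x\notin\BI_N(I)$ of $c_m$ to an element of $\rs_I(N)$ and invoking the lemma with $J=N$) is exactly that specialization, with the reduction bookkeeping handled correctly. Your reading of the second assertion as an equality under an a priori bound $\I_1(A_{m+2a})\subset N$ is a reasonable repair of what appears to be a sloppily stated (or typo'd) direction of containment in the corollary itself.
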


\begin{example}
    Let $S=k[[x,y,z]]$, $I=(x^2y,y^2z,z^2x)$, and $N=(x^2,y^2,z^2)$. Then $\Burch(I)=0$, $\Burch_N(I)=0$, and yet if we resolve $N$ via $(F_\bullet,A_\bullet)$ we see
    \begin{align*}
        \I_1(A_0)&=(x^2,y^2,z^2)\\
        \I_1(A_j)&=(x,y,z)
    \end{align*}
    for $j\geq 1$, because $\Burch_{(x)}(I),\Burch_{(x^2)}(I)>0$, and similarly for $y,y^2,z,z^2$. 
\end{example}

\begin{remark}
    By convention we consider $A_0$, the matrix whose columns are the minimal generators of $N$, when resolving an ideal $N$. When resolving a module $M$, the matrix whose cokernel is $M$ is $A_1$. 
\end{remark}
In addition, we have shown the following useful fact about positive depth ring resolutions: 

\begin{corollary}\label{dualpos}
    If the conditions of the Lemma are satisfied for any $A_m$ in a resolution of $M$, then $M$ has infinite projective depth. In particular this holds if $\Burch_{\I_1(A_j)}(I)>0$. This is true in the non-trivial setting where $\operatorname{depth}(R)>0$.  
\end{corollary}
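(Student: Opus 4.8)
The plan is to obtain this as an essentially immediate consequence of Lemma \ref{dual}. Suppose for contradiction that $M$ has finite projective dimension over $R$, i.e.\ its minimal free resolution $\epsilon:(F_\bullet,A_\bullet)\rightarrow M$ terminates, say $A_\ell$ is the last nonzero map and $F_{\ell+1}=0$. The hypothesis is that the conditions of Lemma \ref{dual} are met for some $A_m$: there is a column $c_m$ of $A_m$, an ideal $J\supset \I_1(c_m)$, and an element $x\in\rs_I(J)$ a reduction of some entry of $c_m$. Lemma \ref{dual} then produces, for every $a\geq 1$, a minimal generator of $\operatorname{im}(A_{m+2a})=\operatorname{syz}_{m+2a}(M)$; in particular $\operatorname{syz}_{m+2a}(M)\neq 0$ for all $a\geq 1$, so $F_{m+2a}\neq 0$ for arbitrarily large indices. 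This contradicts $F_{\ell+1}=0$, so the resolution cannot terminate and $\operatorname{pd}_R M=\infty$. (The phrase ``infinite projective depth'' in the statement should read ``infinite projective dimension.'')

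For the second sentence, I would apply Corollary \ref{dual2} rather than Lemma \ref{dual} directly: if $\Burch_{\I_1(A_j)}(I)>0$ for some $j$ in the resolution, then setting $N=\I_1(A_j)$ we have $N\supset \I_1(A_j)$ and $\I_1(A_j)\not\subset\BI_N(I)$ (precisely because $\Burch_N(I)=\operatorname{length}_S(N/(\BI_N(I)\cap N))>0$ means $N\not\subset\BI_N(I)$, and $N=\I_1(A_j)$). Hence the hypothesis of Lemma \ref{dual} is satisfied with $m=j$ and $J=N$, and moreover $\rs_I(N)\neq\emptyset$ by the Remark following the Realization Set definition, so there genuinely is an $x\in\rs_I(N)$ realized inside the column data; the first part applies and gives $\operatorname{pd}_R M=\infty$.

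Finally, for the last sentence one only needs to exhibit that this situation is non-vacuous when $\operatorname{depth}(R)>0$, which is exactly what the surrounding examples (e.g.\ the $I=(xz,yz,zw,xw)$ example) provide: there one computes $\Burch_{(x)}(I)>0$ even though $\operatorname{depth}(R)>0$, and a suitable resolution has a column inside $(x)$, so Lemma \ref{dual} forces non-termination. I would simply cite that example.

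The only mild subtlety — and the one place to be careful — is matching the precise hypothesis of Lemma \ref{dual}, namely that the column $c_m$ \emph{contains a reduction of some} $x\in\rs_I(J)$, as opposed to merely $\I_1(c_m)\not\subset\BI_J(I)$. For the clean statement ``$\Burch_{\I_1(A_j)}(I)>0$'' this is handled by Corollary \ref{dual2}, whose proof already did the bookkeeping of locating such an $x$ among the generators; so invoking the corollary rather than re-deriving the containment is the cleanest route and avoids any gap. Everything else is a one-line termination argument, so I do not anticipate a real obstacle.
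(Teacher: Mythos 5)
Your proposal is correct and takes essentially the same route as the paper: the paper's own (one-line) proof likewise just observes that Lemma \ref{dual} forces nonzero columns (there, $[x^*]_j$ in $A_{m+(2a-1)}$) to persist for all $a\geq 1$, so the resolution never terminates. Your extra care in routing the ``$\Burch_{\I_1(A_j)}(I)>0$'' case through Corollary \ref{dual2} is a reasonable filling-in of a step the paper leaves implicit, not a different argument.
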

\begin{proof}
    The lemma proves the existence of $[x^*]_j$ in $A_{m+(2a-1)}$ for $a\geq 1$. Thus $A_\bullet$ is not $0$ asymptotically. 
\end{proof}

\section{Proof of Generalized Burch Index Theorem}\label{kemma}
We are now equipped to prove the result on generalized Burch Index. First we need a helper lemma. 
\begin{lemma}[Minimal Generators of Tor]\label{tor}
Consider $\operatorname{Tor}(X,Y)$ of $R$-modules $X,Y$ with minimal free resolutions $\epsilon: (F_\bullet,A_\bullet)\rightarrow X$ and $\delta: (G_\bullet,B_\bullet)\rightarrow Y$. If there exists a nonzero minimal generator $e$ of $F_j$ and $f$ of $G_j$. If $e\otimes f$ is in $\ker(A_j\otimes Y)$, then $e\otimes f$ is nonzero and a minimal generator of $\operatorname{Tor}_j(X,Y)$. 

\begin{proof}
Because $F_\bullet$ is a free $R$-module and $\epsilon$ is a minimal free resolution, we have $\I_1(A_{j+1})\subset \mathfrak{m}$ and thus $e\not\in \operatorname{im}(A_{j+1})$. Assume that $e\otimes f\in \operatorname{im}(A_{j+1}\otimes Y)$, then $e\otimes f=e'\otimes f'$ for some $e'\in \operatorname{im}(A_{j+1})$ and $f'\in Y$. But this cannot be the case because $f$ is a minimal generator of $Y$, so we cannot choose $e',f'$ such that $e'$ has $R$-coordinates in $\mathfrak{m}$. Thus $e\otimes f$ is nonzero. 

To show $e\otimes f$ is a minimal generator, assume for contradiction that, with $x_m\in \mathfrak{m}$,
\begin{align*}
    e \otimes f &= \sum_m x_m e_m\otimes f_m\mod \operatorname{im}(A_{j+1}\otimes Y)\\
    \Leftrightarrow e \otimes f &= (\sum_m x_m e_m\otimes f_m) + (E\otimes F)
\end{align*}
where $E\otimes F\in \operatorname{im}(A_{j+1}\otimes Y)$. We can assume that we cannot factor out $y| 1\neq y\in R$ from $e_m,f_m$. For this sum to be equal to $e\otimes f$, we need the summands on the RHS to reduce to 
$$\sum_m e\otimes x_mf_m + e\otimes F'$$
or
$$\sum_m x_me_m\otimes f + E'\otimes f$$
Since the first paragraph of the proof shows that $E\otimes F$ is not in $\operatorname{im}(A_{j+1}\otimes Y)$ if $E$ and $F$ are minimal generators, $E'$ and $F'$ must not be minimal generators of their respective modules. But then the wlog the first case reduces to $e\otimes (\sum_m x_mf_m+F')$, which is not equal to $e\otimes f$ since $f$ is a minimal generator. 
\end{proof}

\end{lemma}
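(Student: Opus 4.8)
The plan is to prove the lemma by reducing modulo $\mathfrak{m}$ twice and feeding in minimality of $\epsilon$ at the right moment. Recall that $\operatorname{Tor}_j(X,Y)$ is computed as the homology $H_j(F_\bullet\otimes_R Y)$, i.e. as the subquotient $\ker(A_j\otimes Y)/\operatorname{im}(A_{j+1}\otimes Y)$ of $F_j\otimes_R Y$, and the hypothesis $e\otimes f\in\ker(A_j\otimes Y)$ is exactly what is needed for $e\otimes f$ to represent a class $[e\otimes f]\in\operatorname{Tor}_j(X,Y)$. Over the local ring $R$, Nakayama says that an element $z$ of a finitely generated module $N$ belongs to some minimal generating set of $N$ if and only if its residue in $N/\mathfrak{m}N$ is nonzero, and any such $z$ is automatically $\neq 0$; so it is enough to establish the single statement $[e\otimes f]\notin\mathfrak{m}\operatorname{Tor}_j(X,Y)$, which yields both halves of the conclusion at once.

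First I would transport this question back to $F_j\otimes_R Y$. Since
\[
\mathfrak{m}\operatorname{Tor}_j(X,Y)=\bigl(\mathfrak{m}\ker(A_j\otimes Y)+\operatorname{im}(A_{j+1}\otimes Y)\bigr)/\operatorname{im}(A_{j+1}\otimes Y),
\]
the class $[e\otimes f]$ lies in $\mathfrak{m}\operatorname{Tor}_j(X,Y)$ if and only if $e\otimes f\in\mathfrak{m}\ker(A_j\otimes Y)+\operatorname{im}(A_{j+1}\otimes Y)$. This is exactly where minimality of $\epsilon$ enters: $\I_1(A_{j+1})\subset\mathfrak{m}$ says every entry of $A_{j+1}$ lies in $\mathfrak{m}$, so $A_{j+1}$ carries $F_{j+1}$ into $\mathfrak{m}F_j$, and therefore $A_{j+1}\otimes Y$ carries $F_{j+1}\otimes_R Y$ into $\mathfrak{m}(F_j\otimes_R Y)$. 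Since $\mathfrak{m}\ker(A_j\otimes Y)$ is trivially inside $\mathfrak{m}(F_j\otimes_R Y)$ as well, the whole submodule $\mathfrak{m}\ker(A_j\otimes Y)+\operatorname{im}(A_{j+1}\otimes Y)$ sits inside $\mathfrak{m}(F_j\otimes_R Y)$. Hence it suffices to check $e\otimes f\notin\mathfrak{m}(F_j\otimes_R Y)$ — which incidentally also forces $e\otimes f\neq 0$, since $0\in\mathfrak{m}(F_j\otimes_R Y)$.

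Finally I would verify $e\otimes f\notin\mathfrak{m}(F_j\otimes_R Y)$ by one more reduction mod $\mathfrak{m}$: there is a canonical isomorphism $(F_j\otimes_R Y)/\mathfrak{m}(F_j\otimes_R Y)\cong F_j\otimes_R(Y/\mathfrak{m}Y)$ under which $e\otimes f$ goes to $e\otimes\bar f$, with $\bar f$ the image of $f$ in $Y/\mathfrak{m}Y$. The only property of $f$ actually used is $f\notin\mathfrak{m}Y$, i.e. that $f$ is a minimal generator of the module $Y$ (so the real content of the hypothesis involving $\delta$ is just that $f$ lifts such a generator; the homology of $G_\bullet$ is never needed); this gives $\bar f\neq 0$. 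Because $F_j$ is free with $e$ extendable to a basis, $F_j\otimes_R(Y/\mathfrak{m}Y)$ is a direct sum of copies of $Y/\mathfrak{m}Y$ in which $e\otimes\bar f$ has the nonzero entry $\bar f$ in the $e$-slot and $0$ elsewhere, so it is nonzero, as required.

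I do not anticipate a genuine obstacle here: the argument is bookkeeping plus the one structural input $\operatorname{im}(A_{j+1}\otimes Y)\subseteq\mathfrak{m}(F_j\otimes_R Y)$ coming from minimality of the resolution. The only place to be careful is keeping the subquotient description of $\operatorname{Tor}_j$ straight, so that the ``minimal generator'' question is faithfully pulled up from $\operatorname{Tor}_j(X,Y)$ to $F_j\otimes_R Y$; the author's proof instead manipulates sums of pure tensors directly, which is the same idea in a more combinatorial guise.
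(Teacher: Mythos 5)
Your proof is correct, and it takes a genuinely different and cleaner route than the paper's. You reduce the whole statement to the single claim $e\otimes f\notin\mathfrak{m}(F_j\otimes_R Y)$ via Nakayama and the two containments $\mathfrak{m}\ker(A_j\otimes Y)\subseteq\mathfrak{m}(F_j\otimes_R Y)$ and $\operatorname{im}(A_{j+1}\otimes Y)\subseteq\mathfrak{m}(F_j\otimes_R Y)$ (the latter being exactly where minimality of $\epsilon$ enters), and then finish by passing to $F_j\otimes_R(Y/\mathfrak{m}Y)\cong(Y/\mathfrak{m}Y)^{\oplus\operatorname{rank}F_j}$. The paper instead argues directly with pure tensors: it assumes an element of $\operatorname{im}(A_{j+1}\otimes Y)$ has the form $e'\otimes f'$ for a single pure tensor, and later tries to constrain how a sum $\sum_m x_m e_m\otimes f_m+E\otimes F$ could ``reduce'' to $e\otimes f$. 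Those steps are not justified as written --- elements of the image of $A_{j+1}\otimes Y$ are sums of pure tensors, not single ones, and equality of tensors in $F_j\otimes_R Y$ cannot be analyzed summand by summand --- so your argument is not only different but materially more rigorous; it is the standard way to see that a basis element tensored with a minimal generator survives in the homology of a minimal complex. You also correctly diagnose that the hypothesis involving $\delta$ and $G_j$ is inert: the only input about $f$ is $f\notin\mathfrak{m}Y$, which is consistent with how the lemma is actually invoked later in the paper (there $f$ is a column of $A_1$, i.e.\ a minimal generator of $\operatorname{im}(A_1)=Y$). One small point worth recording explicitly if you write this up: finiteness of $\operatorname{Tor}_j(X,Y)$ as an $R$-module (so that Nakayama applies) follows from $X,Y$ being finitely generated over the Noetherian local ring $R$, which is the standing setting of the paper.
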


This allows us to prove the following major lemma. After reading through the lemma, consider again this remark: 
\begin{remark}[Intuition]\label{intuition}
    If $\Burch_N(I)\geq 1$, then ideals $N$ tend to persist in resolutions (cf. Corollary \ref{dual1}). If $\Burch_N(I)\geq 2$, then ideals $N$ not only persist, but also any ideals less than $N$ tend to grow to $N$.  
\end{remark}

\begin{lemma}\label{Up}
    Let $\epsilon: (F_\bullet,A_\bullet)\rightarrow M$ be a minimal $R$-resolution, and $M$ be an $R$-module. Let $\Burch_N(I)\geq 2$, and $(0)\subsetneq \I_1(A_{j})\subset N$ for some $1\leq j$. Then for all $v\geq j+5$,
    \begin{equation}\label{3}
        \I_1(A_{v})+\I_1(A_{v+1})\supset N
    \end{equation} 
\end{lemma}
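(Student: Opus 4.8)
The plan is to leverage Remark \ref{intuition}: since $\Burch_N(I) \geq 2$, not only does $N$ persist in resolutions (Corollary \ref{dual1}), but smaller ideals should grow toward $N$. I will work with the Burch ideal resolution: by the hypothesis $\Burch_N(I) \geq 2$ there are at least two $k$-independent classes in $N/(\BI_N(I) \cap N)$, i.e.\ (at least) two elements $x, x' \in \rs_I(N)$ that are $k$-linearly independent modulo $\BI_N(I)$, realized by $x^*, x'^* \in \RS_I(N) \subset (I:N)$. The first step is to apply Lemma \ref{dual} to each of these separately, starting from the column $c_j$ of $A_j$: since $(0) \subsetneq \I_1(A_j) \subset N$, there is some column $c_j$ with $\I_1(c_j) \neq 0$ and $\I_1(c_j) \subset N$. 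If $\I_1(c_j)$ already contains an element of $\rs_I(N)$, Lemma \ref{dual} gives us $N \subset \I_1(A_{j+2a})$ directly and we are done; the hard case is when $\I_1(c_j) \subset \BI_N(I) \cap N$ for every column, so no single column sees the realized set.

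The key idea for that hard case is to build, at some bounded step, a syzygy element whose $1 \times 1$ minor ideal does meet $\rs_I(N)$, using a tensor/Tor argument. Concretely, I would consider $\operatorname{Tor}^R_\bullet(M, R/N)$ or more precisely compare the resolution $F_\bullet$ against a resolution related to $\BI_N(I)$, and use Lemma \ref{tor}: if $e$ is a minimal generator of some $F_i$ and $f$ is a minimal generator of a resolution of $N$ (or of $R/(I:N)$) with $e \otimes f \in \ker(A_i \otimes -)$, then $e \otimes f$ survives as a minimal generator of $\operatorname{Tor}_i$. The point is that the element $x^* \in (I:N)$ annihilates $N$, so multiplication by $x^*$ on $F_i/\mathfrak{m}F_i$-classes lands in the kernel after tensoring; one then transfers this back to say that $[x^*]_j$ must appear as a minimal generator of some $\operatorname{syz}_{i}(M)$ within a bounded number of steps ($i \leq j + $ small constant), and then Lemma \ref{dual} applied at step $i$ with $J = N$ propagates $N \subset \I_1(A_{i+2a})$ and $x^*R \subset \I_1(A_{i + 2a-1})$ for all $a \geq 1$. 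Running the same argument for the second independent element $x'$ (or for $x^*$ vs.\ $x'^*$) and assembling across consecutive steps gives $\I_1(A_v) + \I_1(A_{v+1}) \supset N$ once $v$ is past the bounded startup range; tracking the constants should yield the stated bound $v \geq j + 5$.

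The main obstacle I anticipate is the bookkeeping in the hard case: showing that when no column of $A_j$ meets $\rs_I(N)$, the $\Burch_N(I) \geq 2$ hypothesis nonetheless forces some bounded-step syzygy to acquire a minimal generator of the form $[x^*]_p$ with $x^*$ realizing an element of $\rs_I(N)$. This is exactly where ``$\Burch_N \geq 2$'' rather than ``$\geq 1$'' must be used — the extra dimension of $\rs_I(N)$ is what prevents the resolution from staying trapped inside $\BI_N(I) \cap N$ — and making that precise via Lemma \ref{tor} (choosing the right auxiliary module $Y$, verifying the kernel condition, and counting homological degrees) will be the delicate part. A secondary concern is getting the constant in $v \geq j+5$ exactly right: the Tor argument may cost a couple of steps to produce the generator, Lemma \ref{dual}'s two-periodicity costs another, and combining the parity classes for the ``$+\I_1(A_{v+1})$'' costs one more, so I would expect to verify $j \to j+1 \to \cdots \to j+5$ carefully rather than optimize it.
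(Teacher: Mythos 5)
You have the right ingredients in view (Lemma \ref{dual}, Lemma \ref{tor}, and the fact that ``$\geq 2$'' must prevent the resolution from being trapped below $N$), and your easy case --- some column of $A_j$ literally contains an element of $\rs_I(N)$, so Lemma \ref{dual} propagates $N$ directly --- is fine. But the hard case, which you correctly flag as the entire content of the lemma, is left as an acknowledged obstacle rather than resolved, and the mechanism you propose for it points in the wrong direction. You aim to manufacture a minimal generator of the form $[x^*]_p$ in some bounded syzygy of $M$ itself and then propagate with Lemma \ref{dual}. That would prove $N\subset \I_1(A_v)$ for a single parity class, which is strictly stronger than the lemma's conclusion $\I_1(A_v)+\I_1(A_{v+1})\supset N$; the two-term formulation exists precisely because single-step containment is not available in general, so this route is unlikely to close.

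The paper's argument is instead a contradiction via an auxiliary colength-one ideal. Suppose $\I_1(A_m)+\I_1(A_{m+1})=X\subsetneq N$ (with $m=j+2$ and three consecutive matrices included to fix parity); choose $Q$ with $X\subset Q\subsetneq N$ and $\operatorname{length}_S(N/Q)=1$. Then $A_m\otimes R/Q=A_{m+1}\otimes R/Q=0$ forces $\operatorname{Tor}_m(M,R/Q)\simeq \oplus R/Q$, which dimension-shifts to $\operatorname{Tor}_{m-2}(\operatorname{im}(A_1),Q)$. The hypothesis $\Burch_N(I)\geq 2$ is used exactly here: a colength-one subideal $Q$ of $N$ cannot be contained in $\BI_N(I)\cap N$, so $Q$ has a minimal generator in $\rs_I(N)$, and Lemma \ref{dual} applied to the minimal resolution $(G_\bullet,B_\bullet)$ of $Q$ --- not of $M$ --- produces columns $[x^*]_p$ with $x^*\in\RS_I(N)$ in the right homological degrees. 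Tensoring $G_\bullet$ with $\operatorname{im}(A_1)$ and invoking Lemma \ref{tor} yields a nonzero minimal generator $e$ of $\operatorname{Tor}_{m-2}(\operatorname{im}(A_1),Q)$ with $ne=0$ for all $n\in N$; taking $n\in N\setminus Q$ contradicts freeness over $R/Q$. Your sketch never sets up this contradiction, never identifies $Q$, and tensors with the wrong auxiliary object ($R/N$ or $\BI_N(I)$ rather than a colength-one $Q$ determined by the failure of the conclusion), so the central step of the proof is missing.
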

\begin{proof}
    We will first show \begin{equation}\label{eq}
        \I_1(A_m)+\I_1(A_{m+1})\neq X\subsetneq N
    \end{equation}
    for $m=j+2$. For contradiction assume $j=1$ and that $\I_1(A_m)+\I_1(A_{m+1})=X\subsetneq N$ for $m$ odd. Then there is an ideal $Q$ with $X\subset Q\subsetneq N$ and $\operatorname{length}_S(N/Q)=1$. Tensoring the resolution $(F_\bullet,A_\bullet)$ with $R/Q$, one has $A_{m}\otimes R/Q=A_{m+1}\otimes R/Q=0$ and thus gets
    \begin{equation}\label{1}
        \oplus R/Q \simeq \operatorname{Tor}_m(M,R/Q)\simeq \operatorname{Tor}_{m-1}(M,Q)\simeq \operatorname{Tor}_{m-2}(\operatorname{im}(A_1),Q)
    \end{equation}
Consider a minimal resolution $\delta: (G_\bullet,B_\bullet)\rightarrow Q$. Resolving $Q\otimes \operatorname{im}(A_1)$ over $R\otimes \operatorname{im}(A_1)$, one considers
$$G_{m-1}\otimes \operatorname{im}(A_1) \xrightarrow{B_{m-1}\otimes \operatorname{im}(A_1)} G_{m-2}\otimes \operatorname{im}(A_1)\xrightarrow{B_{m-2}\otimes \operatorname{im}(A_1)} G_{m-3}\otimes \operatorname{im}(A_1)$$
which we rewrite as 
$$H_{m-1}\xrightarrow{C_{m-1}} H_{m-2}\xrightarrow{C_{m-2}} H_{m-3}$$ for brevity. Since $\Burch_N(I)\geq 2$, and $N\supset Q\supset \I_1(B_0)$, and $Q$ has colength $1$ in $N$, we may apply Lemma \ref{dual} to show the existence of a minimal generator $[x^*]_p$ of $\operatorname{im}(B_{1+2b})$ with $x^*\in \RS_I(N)$ and $b\geq 0$, for some $p$. Thus, $[n]_q\in \operatorname{im}(B_{2+2b})$ for each $n\in N$ and some $q$. 

Note that $m-2= 1$ is odd by assumption and thus $[x^*]_p$ is a minimal generator of $\operatorname{im}(B_{m-2})$. Then $q$ satisfies $e=[1]_q\otimes g\in H_{m-2}$ and $$C_{m-2}(e)=[x^*]_p\otimes g\simeq [1]_p\otimes x^*g=0$$ since $x^*\in \RS_I(N)\subset (I:N)$ and $g$ is a column of $A_1$ which has $\I_1(A_1)\subset N$ by assumption. Thus $e\in \ker C_{m-2}$. 

Note that $[1]_q$ is nonzero and $g$ is nonzero by assumption since $(0)\subsetneq \I_1(A_j)\subset N$. Thus by Lemma \ref{tor}, $e$ is nonzero and a minimal generator of $\operatorname{Tor}_{m-2}(\operatorname{im}(A_1),Q)$. Yet for all $n\in N$, $$ne=[n]_q\otimes g\in \operatorname{im} C_{m-1}$$ and thus $ne=0$ in $\operatorname{Tor}_{m-2}(\operatorname{im}(A_1),Q)$. Choose $n\not\in Q$ to get a contradiction with Equation (\ref{1}): $\oplus R/Q$ has no minimal generator with order $n$. To ensure $m$ is odd, we include $A_m,A_{m+1},A_{m+2}$ in (\ref{eq}). We can choose arbitrary $j\geq 1$ by considering the resolution of $\operatorname{syz}_{j-1}(M)$. To show (\ref{3}), note that (\ref{eq}) implies $\I_1(A_{y})\not\in \BI_N(I)$ for some $y\in \{j,j+1,j+2\}$, and use Lemma \ref{dual}. 

\end{proof}

\begin{proof}[Proof of Theorem \ref{Big1}]
    We can exclude free modules $M/R$, which have $0$ resolution. All other cases with finite projective dimension are not included in Theorem \ref{Big1} since $\Burch(I)\neq 0$ excludes positive depth rings. Assume $\operatorname{bd}(I)=j$. Then $\Burch^{j+1}(I)\geq 2$. Thus let $\epsilon: (F_\bullet, A_\bullet)$ be a minimal free resolution with $\I_1(A_1)\subset \BI^{j}(I)$. By Lemma \ref{Up}, $$\I_1(A_v)+\I_1(A_{v+1})\supset \BI^{j}(I)$$ for $v\geq 6$. Alternatively, if $$\I_1(A_1)\not\subset \BI^{j}(I)$$, then $$\I_1(A_{1+2a})\supset \BI^j(I)$$ for $a\geq 1$ by Corollary \ref{dual1}.
    Thus all resolutions satisfy $$\I_1(A_v)+\I_1(A_{v+1})\supset \BI^{j}(I)$$ for $v\geq 6$. And for all such resolutions, if $$\I_1(A_v)+\I_1(A_{v+1})\supsetneq \BI^{j}(I)$$, then wlog $\I_1(A_{v})\not\subset \BI^j(I)$, and $$\I_1(A_{v+2a})\supset \BI^{j-1}(I)$$ by Corollary \ref{dual1}. By induction, one sees that all resolutions satisfy $$\I_1(A_v)+\I_1(A_{v+1})=\BI^q(I)$$ for some $0\leq q\leq j$. 
\end{proof}

Finding a non-trivial example where each $\BI^n(I)$ does not quickly degenerate to the maximal ideal is not as easy as one might expect. Yet we would like one to verify the validity of these lemmata. Here is one: 
\begin{example}
    Let $S=k[[x_1..x_3]]$ and $I=(x_2x_3+28x_3^2,x_2^2-30x_3^2,x_1x_3^2,x_1^3x_3)$. Then $\operatorname{gb}(I)=2$ and $\operatorname{bd}(I)=1$. Importantly, the minimal $R$-resolution $\epsilon: (F_\bullet, A_\bullet)\rightarrow \BI^1(I)$ has $\I_1(A_6)=\I_1(A_7)=\BI^1(I)=(x_3,x_2,x_1^2)$. One notes that 
    $$\BI^2(I)=(x_2+28x_3,x_3^2,x_1x_3,x_1^3)$$
    and considers the minimal resolution of $(x_2+28x_3)\subsetneq \BI^2(I)$: 
\begin{align*}
    \I_1(B_1)&=(x_2+28x_3)\\
    \I_1(B_2)&=(x_3,x_1x_2)\\
    \I_1(B_3)&=(x_3,x_2,x_1^3)\\
    \I_1(B_j)&=(x_3,x_2,x_1^2), \text{ } 4\leq j \leq 8
\end{align*}
Lemma \ref{Up} says that $\I_1(B_3)+\I_1(B_4)$ is not strictly contained in $\BI^1(I)$, and together with Lemma \ref{dual} further implies that $\I_1(B_j)$ or $\I_1(B_{j+1})$ contains $\BI^1(I)$ for $j\geq 4$. These conclusions are supported with this example. 
\end{example}


\begin{remark}
    One can analogously define $\BI_N^j(I)$, $\Burch_N^j(I)$, $\operatorname{bd}_N(I)$, and $\operatorname{gb}_N(I)$, and show the same type of result for $\operatorname{gb}_N(I)\geq 2$. Note this will have the same periodic result unless $\I_1(A_m)\supsetneq N$, after which the resolutions are not proven to be periodic. Note we must define as an edge case $\Burch_N^j(I)=0$ when $\BI_N(I)\supsetneq N$ for the same reason that comes up for positive depth case in the $N$-Burch Ideal section. 
\end{remark}

\section{Untwisting}\label{untwist}

We now seek to generalize the results that relied on Lemma \ref{Up} and Corollary \ref{dual1} to results relying on Corollary \ref{dual2}. Here is the thesis of this section: Because of the column-wise periodicity we see in Lemma \ref{dual} and Corollary \ref{dual1}, we can more easily prove periodicity when the elements of each column of $A_m$ generates a small ideal. For example, when resolving an ideal $N$ of $R$, $\I_1(A_0)=(n_1,...,n_j)$ where $(n_1,...,n_j)$ is some minimal generating set of $N$. Take $N=\mathfrak{m}=(x_1,...,x_n)$ and assume that $\Burch(I)=0$. Then without the column-wise approach, we cannot say anything about the periodicity of this sequence in general without the Burch approach. However, if $\Burch_{(x_j)}(I)\geq 1$ for each $x_j$, then $\I_1(A_{2m})=\mathfrak{m}$ for each $m$ by Lemma $\ref{dual}$, despite $I$ not having a positive Burch Index (in fact for the maximal ideal case we may say more and still get a direct summand of $k$ in the kernel as in Eisenbud and Dao \cite{eisbur}, but this is beside the point). A similar fact holds true for $\Burch_N(I)=0$.

\begin{example}[Positive Depth]
    One straightforward way to get $\Burch(I)=0$ yet $\Burch_{(x_j)}(I)\geq 1$ for each $j$ is to choose $I$ as a monomial ideal generated by a single element. This can yield \textit{positive depth} cases as well. For example, let $S=k[[x,y]]$ and $I=(x^2y)$. This meets the conditions and we see the resolution begins
    $$R^2 \xrightarrow{\begin{pmatrix}
        x^2 & 0\\x & y
    \end{pmatrix}} R^2 \xrightarrow{\begin{pmatrix}
        -x & x^2\\y & 0
    \end{pmatrix}} R^2 \xrightarrow{\begin{pmatrix}
        xy & 0\\y & x
    \end{pmatrix}}R^2 \xrightarrow{\begin{pmatrix}
        -y & xy\\x & 0
    \end{pmatrix}} R^2 \xrightarrow{\begin{pmatrix}
        x & y
    \end{pmatrix}} R$$
    and whose matrices repeat with period $2$ (there are two pairs of isomorphic images in the presumed $4$-period above), with $${\begin{pmatrix}
        -y & xy\\x & 0
    \end{pmatrix}}$$ being the next matrix in the resolution. The resolution of the maximal ideal may be able to be obtained through other means, but even in the case where $\I_1(A_1)=\mathfrak{m}$, these techniques are more general: in the same ring, consider the resolution of
    $$\begin{pmatrix}
        x & xy \\ y^2 & y
    \end{pmatrix}$$
    which by column-wise Burch will have $\I_1(A_j)=\mathfrak{m}$ for each odd index $j$. Again this particular case turns out to be a complete intersection ring, where ideal-periodicity was proven by Dao, Brown, and Sridhar \cite{dao2}. More general examples are of course available when the ring is not a complete intersection ring (or Golod) and for general ideals $N\neq \mathfrak{n}$, as in the next theorem. 
\end{example}


\begin{theorem}
    Let $I$ be a monomial ideal such that $\Burch(I)=1$ and $S/I$ is a depth $0$ ring. If $x_n^2\alpha$ is a minimal generator of $I$ for some $\alpha\in S$ and $\epsilon: (F,A)\rightarrow \BI(I)$ is a minimal free resolution, then $$\I_1(A_m)+\I_1(A_{m+1})\in \{\BI(I),\mathfrak{m}\}$$ for all $m$. If $\I_1(A_{m_0})+\I_1(A_{m_0+1})=\mathfrak{m}$ for some $m_0$, then $\I_1(A_m)=\mathfrak{m}$ for all $m>>m_0$. In particular, $F$ is ideal-periodic with period at most $2$. 
\end{theorem}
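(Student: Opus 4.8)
The plan is to resolve $N:=\BI(I)$ and to track the minor ideals $\I_1(A_m)$, using that $N$ sits one step below $\mathfrak{m}$. First I would pin down the shape of $N$. Since $I$ is monomial and $S/I$ has depth $0$, every variable $x_j$ divides some minimal generator of $I$ — otherwise $(I:x_j)=I$ and $x_j$ is a nonzerodivisor on $S/I$. Writing such a generator as $x_j u$ gives $x_j u\notin\mathfrak{n}I$ with $u\in(I:x_j)$, so $\Burch_{(x_j)}(I)\ge 1$; likewise the hypothesis that $x_n^2\alpha$ is a minimal generator yields $\alpha\in(I:x_n^2)$ with $x_n^2\alpha\notin\mathfrak{n}I$, so $\Burch_{(x_n^2)}(I)\ge 1$. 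Because $\Burch(I)=1$ and $\mathfrak{n}^2\subseteq\BI(I)\subseteq\mathfrak{n}$ with $\BI(I)$ monomial, $\BI(I)$ is $\mathfrak{n}$ with exactly one degree-one monomial deleted, and the minimal-generator hypothesis identifies the deleted variable as $x_n$, so $N=(x_1,\dots,x_{n-1},x_n^2)$. In particular $\mathfrak{m}/N$ is one-dimensional: $N$ and $\mathfrak{m}$ are the only ideals $J$ with $N\subseteq J\subseteq\mathfrak{m}$, and $x_n$ is the only monomial in $\mathfrak{m}\smallsetminus N$.

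Next, the dichotomy. The matrix $A_0$ is $[\,x_1\mid\cdots\mid x_{n-1}\mid x_n^2\,]$, whose columns generate $(x_1),\dots,(x_{n-1}),(x_n^2)$, each with positive $\Burch_{\bullet}(I)$ by the previous step. Corollary \ref{dual2} then gives $(x_j)\subseteq\I_1(A_{2a})$ and $(x_n^2)\subseteq\I_1(A_{2a})$ for all $a\ge 1$, hence $\I_1(A_{2a})\supseteq N$ for $a\ge 1$. For any $m\ge 0$ one of $m,m+1$ is even, and the minor ideal at that even index contains $N$ (it is $N$ itself if the index is $0$); thus $N\subseteq\I_1(A_m)+\I_1(A_{m+1})\subseteq\mathfrak{m}$, which by the preceding sentence is $N$ or $\mathfrak{m}$.

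For persistence of $\mathfrak{m}$, suppose $\I_1(A_{m_0})+\I_1(A_{m_0+1})=\mathfrak{m}$. Then $\I_1(A_\mu)\not\subseteq N$ for $\mu=m_0$ or $m_0+1$; as $x_n$ is the only monomial outside $N$ and the resolution is $\mathbb{Z}^n$-graded, $x_n$ occurs as an entry of some column $c$ of $A_\mu$ (so $\mu\ge 1$). Since $x_n\notin\BI(I)=N$ we have $x_n\in\rs_I(\mathfrak{m})$, so Lemma \ref{dual} applied to $c$ with $J=\mathfrak{m}$, $x=x_n$ gives $\mathfrak{m}\subseteq\I_1(A_{\mu+2a})$, hence $\I_1(A_{\mu+2a})=\mathfrak{m}$, for all $a\ge 1$, together with a minimal generator $[x_n]_i$ of $\operatorname{syz}_{\mu+2a}(N)$ and minimal generators $[x^*]_j$ of $\operatorname{syz}_{\mu+2a-1}(N)$ for every $x^*\in\RS_I(\mathfrak{m})$ realizing $x_n$. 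This settles the parity of $\mu$. To cross to the opposite parity I would use the hypothesis $x_n^2\alpha\in I$ again: it furnishes a (minimal) first syzygy $[\alpha]_n$ of the generators of $N$, i.e. a column of $A_1$ on the odd strand, with $\Burch_{(\alpha)}(I)\ge 1$; then chaining Lemma \ref{dual}, Corollary \ref{dual2} and Burch Duality along the two strands should force $x_n$ into the minor ideals of the other parity as well — equivalently, one shows $k$ becomes a direct summand of $\operatorname{syz}_s(N)$ for some $s\ge 1$ and applies part (2) of Eisenbud and Dao's Theorem $4.1$ (available since $\Burch(R)=1$) to get $k$ a direct summand of $\operatorname{syz}_i(N)$, hence $\I_1(A_i)=\mathfrak{m}$, for all $i\gg 0$. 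The period-at-most-$2$ statement then follows, since $m\mapsto\I_1(A_m)+\I_1(A_{m+1})$ is eventually constant (equal to $N$ or to $\mathfrak{m}$), with $\I_1(A_m)=\mathfrak{m}$ for all large $m$ in the second case.

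The main obstacle is precisely this parity-crossing: the two conclusions of Lemma \ref{dual} move the distinguished entry by an even shift and its realizers by an odd shift, so iterating it blindly only reproduces $x_n$ on the strand where it first appeared, and one must genuinely exploit the monomial structure coming from $x_n^2\alpha$ (or an Eisenbud and Dao-type direct-summand argument) to break the symmetry. A smaller point that also needs care is confirming that $\BI(I)$ has exactly the normal form $(x_1,\dots,x_{n-1},x_n^2)$, i.e. that the deleted variable is the one occurring in the minimal generator $x_n^2\alpha$.
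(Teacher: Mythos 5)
Your proposal follows essentially the same route as the paper's proof: establish $\Burch_{(x_j)}(I)>0$ for $j<n$ and $\Burch_{(x_n^2)}(I)>0$, apply the column-wise Corollary \ref{dual2} to the columns of the presentation of $\BI(I)$ to get $N\subseteq\I_1(A_m)+\I_1(A_{m+1})\subseteq\mathfrak{m}$ with $N$ of colength $1$, and invoke Eisenbud--Dao's Theorem 4.1 for the persistence of $\mathfrak{m}$. You are in fact more explicit than the paper about two points it leaves implicit (that $\BI(I)=(x_1,\dots,x_{n-1},x_n^2)$ with $x_n$ the deleted variable, and that Lemma \ref{dual} alone only controls one parity strand so the crossing must come from the direct-summand argument), but the underlying argument is the same.
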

\begin{proof}
    Since $S/I$ has depth $0$ and $I$ is monomial, $x_j\in \rs_I(x_j)$ for each $j$ with $1\leq j<n$, and since $x_n^2\alpha$ is a minimal generator, $x_n^2\in \rs_I(x_n^2)$. In particular, $\Burch_{(x_j)}(I)>0$ for each $j$ with $1\leq j<n$, and $\Burch_{(x_n^2)}(I)>0$. By Lemma \ref{dual} and specifically Corollary \ref{dual2}, since $\BI(I)=(x_1,...,x_n^2)=\I_1(A_1)$, $\BI(I)\subset I_1(A_{2a+1})$ for all $a\geq 0$. For the second claim, see Eisenbud and Dao Theorem $4.1$ \cite{eisbur}. Note we can also apply Lemma \ref{dual} again for the second claim, but due to its generality would only get $2$-periodicity of $\mathfrak{m}$. 
\end{proof}

This occurs often and is not an accident; if $x\in \rs_I(N)$, then $x\in \rs_I(J)$ for any $J\subset N$, as long as $x\in J$. Thus $N$-periodicity that can be obtained via Lemma \ref{dual} by considering the entire ideal $\I_1(A_1)$ can always be obtained by considering just the column containing a $x\in \rs_I(N)$ (also by Lemma \ref{dual}). When choosing $N=\I_1(c)$ for some column however, the converse is not true. 

If instead we tried to resolve $$M=\operatorname{coker}
    \begin{pmatrix}x_1&x_2x_1&...&x_2\\ x_2&x_{n}^2&...&x_{1}^2x_3\\ ...&...&...&...\\ x_n&x_{n-1}&...&x_{n}^3\end{pmatrix}
$$
we could not use the column-wise approach with $\Burch_{(x_j)}(I)\geq 1$, since the column ideals are too large. We loosely call this phenomenon a \textit{twisted} matrix, and the results of this section are devoted to recovering the column-wise approach for such matrices, or \textit{untwisting}. Both of the techniques in this section will be based on Lemma \ref{Up}. 



\begin{lemma}[Untwisting Supplement, Compare w/ Lemma \ref{Up}]\label{twist1}
     Let $\epsilon: (F_\bullet,A_\bullet)\rightarrow M$ be a minimal $R$-resolution and consider an ideal $N$. Let $\Burch_{(n)}(I)\geq 1$ for each $n$ in some minimal generating set of $N$, and $(0)\subsetneq \I_1(c_{j})\subset (n)$ for some column $c_j$ of $A_j$ with $1\leq j$. Further consider each ideal $Q\subset N$ with $\operatorname{length}_S(N/Q)=1$ and the minimal resolution $\delta: (G_\bullet, B_\bullet)\rightarrow Q$. Let $n\not\in Q$ be the unique minimal generator of $N$ not in $Q$. Assume $d_i=[n^*]_q$, where $d_i$ is a column in $B_i$ for some $i\geq j$ and $n^*\in \RS_I((n))$. Then 
    \begin{equation}\label{32}
        \I_1(A_{v})+\I_1(A_{v+1})\supset N
    \end{equation} for all $v\geq j+5$. 
\end{lemma}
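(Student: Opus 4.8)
The plan is to run the proof of Lemma \ref{Up} essentially verbatim, with the column-wise data of the hypothesis taking over the roles played there by the inclusion $\I_1(A_1)\subset N$ and by the column that Lemma \ref{dual} produces. First I would argue by contradiction: suppose $\I_1(A_m)+\I_1(A_{m+1})=X\subsetneq N$ for some $m$ in a short run of consecutive indices (around $j+2$, running over three successive matrices so the parity constraint imposed below can be met, and replacing $M$ by $\operatorname{syz}_{j-1}(M)$ if one prefers $j=1$). Pick an ideal $Q$ with $X\subset Q\subsetneq N$ and $\operatorname{length}_S(N/Q)=1$; then $n\notin Q$ is the unique minimal generator of $N$ missing from $Q$, and for this $n$ the hypothesis hands us a column $c_j$ of $A_j$ with $(0)\subsetneq\I_1(c_j)\subset(n)$ and, for the minimal resolution $\delta:(G_\bullet,B_\bullet)\to Q$, a column $d_i=[n^*]_q$ of $B_i$ with $n^*\in\RS_I((n))$. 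Tensoring $(F_\bullet,A_\bullet)$ with $R/Q$ annihilates $A_m$ and $A_{m+1}$ (their entries lie in $X\subset Q$), so a dimension shift gives
$$\oplus R/Q\;\simeq\;\operatorname{Tor}_m(M,R/Q)\;\simeq\;\operatorname{Tor}_{m-1}(M,Q)\;\simeq\;\operatorname{Tor}_{m-1-j}(\operatorname{im}(A_j),Q).$$

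Next I would manufacture a nonzero minimal generator of the right-hand Tor that is annihilated by $n$, just as in Lemma \ref{Up}. Tensor $(G_\bullet,B_\bullet)$ with $\operatorname{im}(A_j)$ and set $C_\bullet=B_\bullet\otimes\operatorname{im}(A_j)$, so $\operatorname{Tor}_{m-1-j}(\operatorname{im}(A_j),Q)=\ker(C_{m-1-j})/\operatorname{im}(C_{m-j})$. By Burch Duality, $n^*\in\RS_I((n))$ realizing $n$ gives $n\in\RS_I((I:(n)))$ realizing $n^*$, together with $\Burch_{(I:(n))}(I)\geq1$; feeding the column $d_i$ and the ideal $(I:(n))\supset(n^*)$ into Lemma \ref{dual} applied to $B_\bullet$ shows $[n^*]$ occurs as a column of $B_{i+2a}$ for all $a\geq0$ and $[n]$ occurs as a column of $B_{i+2a-1}$ for all $a\geq1$. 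Choosing $a$ — and, if necessary, shifting $m$ by one inside the run — so that $i+2a=m-1-j$, I may assume the $q'$-th column of $B_{m-1-j}$ is $[n^*]_p$. Since $\I_1(c_j)\subset(n)$ we have $n^*\in\RS_I((n))\subset(I:(n))\subset(I:\I_1(c_j))$, so $n^*c_j=0$ in $\operatorname{im}(A_j)$, and hence for $e:=[1]_{q'}\otimes c_j$ one gets
$$C_{m-1-j}(e)=[n^*]_p\otimes c_j=[1]_p\otimes n^*c_j=0.$$
Since $[1]_{q'}$ is a nonzero basis vector and $c_j$ is a nonzero minimal generator of $\operatorname{im}(A_j)$ — a column of the minimal matrix $A_j$, nonzero because $\I_1(c_j)\neq(0)$ — Lemma \ref{tor} makes $e$ a nonzero minimal generator of $\operatorname{Tor}_{m-1-j}(\operatorname{im}(A_j),Q)$.

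Now I would close the argument. As $n^*\in(I:(n))$ we have $nn^*\in I$, so $B_{m-1-j}([n]_{q'})=[nn^*]_p=0$, that is $[n]_{q'}\in\ker(B_{m-1-j})=\operatorname{im}(B_{m-j})$; hence $ne=[n]_{q'}\otimes c_j\in\operatorname{im}(C_{m-j})$, so $ne=0$ in $\operatorname{Tor}_{m-1-j}(\operatorname{im}(A_j),Q)$. But $n\notin Q$, so $\oplus R/Q$ has no nonzero minimal generator killed by $n$, contradicting the displayed isomorphism. Thus $\I_1(A_m)+\I_1(A_{m+1})$ is not strictly contained in $N$ for $m$ in the run; arguing as in the closing lines of Lemma \ref{Up}, some $1\times1$ minor of some $A_y$ then realizes each minimal generator $n'$ of $N$, and since $\Burch_{(n')}(I)\geq1$ for every such $n'$, Corollary \ref{dual2} (the column-wise periodicity inside Lemma \ref{dual}) drives every minimal generator of $N$ into $\I_1(A_v)$ or $\I_1(A_{v+1})$ for all $v\geq j+5$, which is (\ref{32}).

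The main obstacle is the index-and-parity bookkeeping in the second paragraph: one has to combine $i\geq j$ with the $2$-periodicity of Lemma \ref{dual} to land $[n^*]$ in exactly $B_{m-1-j}$ with $m-1-j$ of the right parity — this is why the contradiction is tested over a short run of consecutive matrices rather than a single pair — and then check that, after propagating with Corollary \ref{dual2} and translating back through the shift, the bound indeed becomes $v\geq j+5$. Everything else is a faithful transcription of the proof of Lemma \ref{Up}; the one genuinely new point is that here the Tor-class $e$ is annihilated only by the single generator $n\notin Q$ (because $n^*$ need only lie in $(I:(n))$, not in $(I:N)$), which is nevertheless precisely the annihilation the contradiction needs.
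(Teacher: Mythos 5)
Your proposal is correct and follows essentially the same route as the paper's proof: the same contradiction with a colength-one $Q$, the same dimension shift to $\operatorname{Tor}_{m-2}(\operatorname{im}(A_1),Q)$ (you keep $j$ general where the paper reduces to $j=1$ via $\operatorname{syz}_{j-1}(M)$), the same use of Burch duality plus Lemma \ref{dual} on the resolution of $Q$ to place $[n^*]$ in the correct spot, and the same Tor-element $e=[1]\otimes c_j$ killed by $n\notin Q$. Your two small deviations — deriving $[n]_{q'}\in\operatorname{im}(B_{m-j})$ directly from $nn^*\in I$ and exactness, and noting that annihilation by the single generator $n$ (rather than all of $N$) suffices — are harmless tightenings of the same argument.
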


In other words, under certain conditions, we can check whether the module $M$ has $N$-periodicity by considering column-wise Burch periodicity of the \emph{ideals} $Q$, which have unmixed columns as the minimal generators of $Q$.

\begin{proof}[Proof of Lemma \ref{twist1}]

    We follow a similar proof as Lemma \ref{Up}. We first show
    \begin{equation}\label{eq2}
        \I_1(A_m)+\I_1(A_{m+1})\neq X\subsetneq N
    \end{equation}
    for $m=i+2$. Thus assume for contradiction equality with some $X\subsetneq N$ and let $Q$ be an ideal such that $\I_1(A_m)+\I_1(A_{m+1})+\I_1(A_{m+2})=X\subset Q\subsetneq N$ and $Q$ has colength $1$ in $N$. Let $n$ be a minimal generator of $N$ such that $n\not\in Q$. Let $j$ be the index such that $\I_1(c_j)\supset (n)$ and $1\leq j=m-2$. We may as before assume $j=1$, lest we consider the resolution of $\operatorname{syz}_{j-1} (M)$. As before consider 
    $$\oplus R/Q\simeq \operatorname{Tor}_m(M,R/Q)\simeq \operatorname{Tor}_{m-2}(\operatorname{im}(A_1),Q)$$
    and let 
    $$H_{m-1}\xrightarrow{C_{m-1}} H_{m-2}\xrightarrow{C_{m-2}} H_{m-3}$$
    be defined as in Lemma \ref{Up}. By Lemma \ref{dual} and the conditions on the resolution of $Q$, have that $[n^*]_q$ is a column in $B_{2b+i}$, and $[x]_p\in \operatorname{im} B_{2b+1+i}$ for each $x\in N$, $b\geq 0$. 


    Since by assumption $m-2-i$ is even, we have $[n^*]_q$ as a column of $B_{m-2}$. We can choose the minimal generator $g$ of $\operatorname{im}(A_1)$ corresponding to $c_j$. Then $q$ satisfies $e=[1]_p\otimes g\in H_{m-2}$ and 
    $$C_{m-2}(e)=[n^*]_q\otimes g\simeq [1]_p\otimes n^*g=0$$
    since $g$ is a column of $A_1$ whose entries generate $\I_1(c_j)\subset (n)$. By Lemma \ref{tor}, $e$ is nonzero and a generator of $\operatorname{Tor}_{m-2}(\operatorname{im}(A_1),Q)$. Yet $$ne=[n]_p\otimes g\in \operatorname{im} C_{m-1}$$ since $[n]_q\in \operatorname{im} B_{2b+1+i}$. The conclusion now follows the same as in Lemma \ref{Up}.

\end{proof}


Note these conditions yield more results than just considering when $\Burch_N(I)>0$ for the entire ideal $N=\I_1(A_1)$. One class of examples of ideals that satisfy the conditions if the Lemma yield the second main theorem of this paper. 

\begin{proof}[Proof of Theorem \ref{Big2}]
It suffices to show the conditions of the Theorem satisfy those of Lemma \ref{twist1} when $N=\mathfrak{n}$. Thus it suffices to show: If for each $i$ there is some $j\neq i$ such that there exists $\alpha\in \RS_I((x_i))\cap \RS_I((x_j))$, then $\Burch_{(x_i)}(I)=\Burch_{(x_j)}(I)\geq 1$ and furthermore we may apply the above lemma when $N=\mathfrak{n}$. 

Since each colength $1$ ideal $Q$ contains $x_i$ or $x_j$, and the conditions imply that Lemma \ref{dual} implies that $[\alpha]_p$ will be a column in the second matrix in each resolution. Since $\alpha$ realizes both $x_i$ and $x_j$, $[x_i]_q$ and $[x_j]_q$ will be a column in the third matrix in the resolution. Since the $N$-Burch indices are positive, we are done. 
\end{proof}

 \section{Future Work}
The specific conditions that allow application of the lemma in the Untwisting section should be further fleshed out. More heuristics on the proportion of Burch Index $0$ rings that can be untwisted with the lemma should be explored. Since the lemma in the Untwisting section has a condition where ideals generated by entries of in some columns of $A_j$ are contained in an ideal $N$ for some $j$, one should explore 'resolving' a resolution backwards. In particular, if $\I_1(A_j)\supsetneq N$ for all $j<m$ in the resolution of a module $M$, for example, can we resolve a module $J$ such that the matrices $B_i$ in its resolution satisfy $\I_1(B_{i+b})=\I_1(A_{i})$ for some $b>0$ and $I_1(B_{i})\subsetneq N$ for some $i<b$? The Gorenstein case seems like a good place to start. 
 
There is also a notion of Burch Closure, where the successive realization sets, $\RS_I(\langle \RS_I (... \langle \RS_I(N) \rangle ...) \rangle$, and corresponding realized sets appear in a resolution with, for example, $\I_1(A_1)=N$ and $\Burch_N(I)\geq 1$. Heuristics and results on how these blow up should be considered. 
 
\printbibliography

\end{document}